\newcommand{\bea}{\begin{eqnarray}} 
\newcommand{\eea}{\end{eqnarray}} 
\newcommand{\bee}{\begin{eqnarray*}} 
\newcommand{\eee}{\end{eqnarray*}} 
\newcommand{\al}{\begin{align*}} 
\newcommand{\eal}{\end{align*}} 
\newcommand{\be}{\begin{equation}} 
\newcommand{\ee}{\end{equation}} 
\newcommand{\bem}{\begin{pmatrix}} 
\newcommand{\eem}{\end{pmatrix}} 
\newcommand{\E}{\mathcal{E}}
\newcommand{\W}{\mathcal{W}}
\newcommand{\mst}{{\rm mst}}
\newcommand{\Av}{\operatorname{Av}}
\newcommand{\sk}{{\rm sk}}
\newcommand{\Ex}{\operatorname{Ex}}
\def\a{\alpha} 
\def\b{\beta} 
\def\c{\gamma} 
\def\d{\delta}
\def\f{\phi}
\def\inf{\infty}
\def\l{\lambda} 
\def\m{\mu}
\def\p{\pi}    
\def\pa{\partial}
\def\t{\tau} 
\def\th{\theta}
\def\O{\Omega}
\newcolumntype{R}{ >{$}r <{$}}
\newcolumntype{C}{ >{$}c <{$}}
\newcolumntype{L}{ >{$}l <{$}}
\newcolumntype{F}{>{\centering\arraybackslash}m{1.5cm}}
\newcommand{\gt}[1]{\mathfrak{#1}}
\newcommand{\comment}[1]{}
\newcommand{\RR}{{\mathbb R}}
\newcommand{\CC}{{\mathbb C}}
\newcommand{\ZZ}{{\mathbb Z}}
\newcommand{\N}{{\mathbb N}}
\newcommand{\QQ}{{\mathbb Q}}
\newcommand{\HH}{{\mathbb H}}
\newcommand{\ii}{{\bf i}}
\newcommand{\lab}{{\langle}}    
\newcommand{\rab}{{\rangle}}    
\newcommand{\llab}{\langle\!\langle}
\newcommand{\vv}{\mathbf v}
\newcommand{\Aut}{\operatorname{Aut}}
\newcommand{\Inn}{\operatorname{Inn}}
\newcommand{\tw}{\mathrm{tw}}
\newcommand{\tr}{\operatorname{tr}}
\newcommand{\Res}{\operatorname{Res}}
\newcommand{\ex}{\operatorname{e}} 
\newcommand{\xmod}{{\rm \;mod\;}}
\newcommand{\SL}{\operatorname{\textsl{SL}}}      
\newcommand{\GL}{{\textsl{GL}}}      
\newcommand{\g}{\gamma}	
\newcommand{\uA}{\mathrel{\raisebox{\depth}{\rotatebox{180}{$\!\!A$}}}}
\newcommand{\uvarrho}{\mathrel{\raisebox{\depth}{\rotatebox{180}{$\!\!\varrho$}}}}
\newcommand{\ulambda}{\mathrel{\raisebox{\depth}{\rotatebox{180}{$\!\lambda$}}}}
\newtheorem{thm}{Theorem}[section]
\newtheorem{lem}[thm]{Lemma}
\newtheorem{prop}[thm]{Proposition}
\theoremstyle{definition}
\theoremstyle{remark}
\numberwithin{equation}{section}
\begin{document}

\setstretch{1.4}

\title{
\vspace{-35pt}
    \textsc{\huge{{M}eromorphic {J}acobi {F}orms of {H}alf-{I}ntegral {I}ndex and {U}mbral {M}oonshine {M}odules}
    }
    }

\renewcommand{\thefootnote}{\fnsymbol{footnote}} 
\footnotetext{\emph{MSC2010:} 11F37, 11F50, 17B69, 17B81, 20C35.}     


\renewcommand{\thefootnote}{\arabic{footnote}} 

\author[1]{Miranda C. N. Cheng\thanks{mcheng@uva.nl}}
\author[2]{John F. R. Duncan\thanks{john.duncan@emory.edu}}

\affil[1]{Institute of Physics and Korteweg-de Vries Institute for Mathematics\\
University of Amsterdam, Amsterdam, the Netherlands\footnote{On leave from CNRS, France.}.}
\affil[2]{Department of Mathematics and Computer Science\\
Emory University, Atlanta, GA 30322, U.S.A.}

\date{} 

\maketitle

\abstract{
In this work we consider an association of meromorphic Jacobi forms of half-integral index to the pure D-type cases of umbral moonshine, and solve the 
module problem for four of these cases by constructing vertex operator superalgebras that realise the corresponding meromorphic Jacobi forms as graded traces.
We also present a general discussion of meromorphic Jacobi forms with half-integral index and their relationship to mock modular forms.
}

\clearpage

\tableofcontents

\section{Introduction}\label{sec:intro}

Moonshine connects modular objects and finite groups in an interesting way. 
Starting from 2010, the discovery of Mathieu moonshine for $M_{24}$ \cite{Eguchi2010} has initiated a new wave of activity in the study of moonshine. 
One development arising is the realisation that this $M_{24}$ moonshine is but one of 23 instances of the so-called umbral moonshine \cite{UM,MUM}. 
Umbral moonshine associates 
a vector-valued mock modular form $H^{(\ell)}_g=(H^{(\ell)}_{g,r})$, with a certain level and multiplier system, to each conjugacy class $[g]$ of a finite group $G^{(\ell)}$, where $\ell$ is a symbol (called lambency) which indexes the 23 Niemeier lattices (and also indexes certain genus zero groups of isometries of the upper half-plane, cf. \cite{omjt}). The group $G^{(\ell)}$ can be defined explicitly as the group of outer automorphisms of the Niemeier lattice corresponding to $\ell$ (cf. \S2.4 of \cite{MUM}).
One of the umbral moonshine conjectures, proven for Mathieu moonshine in \cite{MR3539377} and proven for umbral moonshine in general in \cite{umrec}, 
then states that the $H^{(\ell)}_g$ can be identified with the graded characters of a certain infinite-dimensional $G^{(\ell)}$-module.

This is reminiscent of monstrous moonshine \cite{MR554399}, which attaches to each conjugacy class $[g]$ of the monster group a  certain Hauptmodul $T_g(\tau)$ for a certain subgroup $\Gamma_g$ of $\SL_2(\RR)$ that defines a genus zero quotient of the upper half-plane. These functions $T_g(\tau)$ were ultimately shown by Borcherds \cite{MR1172696} to coincide with the graded characters of a certain infinite-dimensional module $V^\natural$ for the monster group, which was constructed earlier by Frenkel--Lepowsky--Meurman \cite{FLMBerk,FLMPNAS} and which famously possesses vertex operator algebra structure \cite{Bor_PNAS,FLM}.

We would like to have a similar understanding of the modules for umbral moonshine. Arguably, this is the most important outstanding question in the study of umbral moonshine. The first case of umbral moonshine for which the module problem was solved is the case where $\ell=30+6,10,15$, the Niemeier lattice is $E_8^{\oplus 3}$, and $G^{(\ell)}\cong S_3$ \cite{MR3649360}. This is the unique case for which the Niemeier lattice coincides with its root lattice. 
So far the techniques of \cite{MR3649360} have not been successfully extended to other cases.

In the more recent work \cite{umvan4}, and in the present paper, the focus is shifted from 
the vector-valued mock modular forms $H^{(\ell)}_g$
to certain meromorphic Jacobi forms. 
The former may be obtained from the latter 
once a canonically defined polar part that captures the poles of the latter 
is removed. We explain this in detail in \S\ref{sec:mjf}. The meromorphic Jacobi forms 
involved in the cases of umbral moonshine discussed here and in \cite{umvan4} admit expressions as sums of ratios of infinite products, and this makes it possible to realise them as (twined) partition functions of certain free chiral conformal field theories. Using this strategy, vertex operator superalgebras are constructed that possess automorphisms containing the corresponding umbral groups $G^{(\ell)}$, and are such that the twined partition functions arising coincide with the  meromorphic Jacobi forms 
specified by umbral moonshine. In this way, the umbral moonshine modules for two of the pure A-type cases, corresponding to $\ell=7$ and $\ell=13$, have been constructed in \cite{umvan4} (while partial solutions to the module problem are given in loc. cit. for $\ell=4$ and $\ell=5$). The corresponding Niemeier lattices are those with the root systems $A_6^{\oplus 4}$ and $A_{12}^{\oplus 2}$, respectively.

The main objective of this work is the construction of umbral moonshine modules for four of the five pure D-type umbral moonshine cases, corresponding to $\ell=10+5$, $\ell=14+7$, $\ell=22+11$ and $\ell=46+23$, for which the corresponding Niemeier lattices are those whose roots systems are $D_6^{\oplus 4}$, $D_8^{\oplus 3}$, $D_{12}^{\oplus 2}$ and $D_{24}$, respectively. 
We achieve this in \S\ref{sec:ummod}. See Theorems \ref{thm:ummod:10+5-psig}, \ref{thm:ummod:7/2-psig}, \ref{thm:ummod:11/2-psig} and \ref{thm:ummod:23/2-psig}. Our methods are similar to those of \cite{umvan4}, but diverge from loc. cit. in an apparently powerful aspect, by considering meromorphic Jacobi forms of half-integral index, rather than those of integral index which are attached to the pure D-type cases of umbral moonshine in \cite{MUM}. 

In order to support our approach we present results on meromorphic Jacobi forms of half-integral index and their relationship to mock modular forms in \S\ref{sec:mjf}. Theorem \ref{thm:mjf:polarfinitedec} represents an extension to half-integral index of the canonical splitting of a meromorphic Jacobi form into polar and finite parts, which was first given for integral index in \cite{Dabholkar:2012nd}. Theorem \ref{non-thm} precisely formulates the relationship between mock modular forms and meromorphic Jacobi forms of weight $1$ and half-integral index that have simple poles only at torsion points, and satisfy a certain growth condition.

The paper is structured in the following way. In \S\ref{sec:mjf} we discuss meromorphic Jacobi forms of half-integral index and their relationship to mock modular forms. We review elliptic forms, theta series and Eicher--Zagier operators of half-integral index in \S\ref{sec:mjf:ell}. We review holomorphic, skew-holomorphic and meromorphic Jacobi forms in \S\ref{sec:mjf:mjf}, and also review there the construction of the finite part of a meromorphic Jacobi form due to Dabholkar--Murthy--Zagier \cite{Dabholkar:2012nd}. In \S\ref{sec:mjf:polar} we review the construction of the polar part of a meromorphic Jacobi form from its poles, assuming that they are simple and constrained to lie at torsion points. This extends the discussion of \S8.2 of \cite{Dabholkar:2012nd} to half-integral indexes. In \S\ref{sec:mjf:mock} we specialise to meromorphic Jacobi forms of weight $1$ satisfying a growth condition, and prove our main result (see Theorem \ref{non-thm}) on the mock modularity of the finite parts and theta coefficients of such forms. In \S\ref{sec:mjf:umbral} we explain the construction which converts the vector-valued mock modular forms of pure D-type umbral moonshine into meromorphic Jacobi forms of half-integral index.

Our main constructions appear in \S\ref{sec:ummod}, where we obtain vertex algebraic realisations of the half-integral index meromorphic Jacobi forms presented in \S\ref{sec:mjf:umbral}, for four of the pure D-type cases of umbral moonshine. 
The case of $D_6^{\oplus 4}$ is treated in \S\ref{sec:ummod:10+5} (see Theorem \ref{thm:ummod:10+5-psig}), the case of $D_8^{\oplus 3}$ is treated in \S\ref{sec:ummod:14+7} (see Theorem \ref{thm:ummod:7/2-psig}), the case of $D_{12}^{\oplus 2}$ is treated in \S\ref{sec:ummod:22+11} (see Theorem \ref{thm:ummod:11/2-psig}), and $D_{24}$ is treated in \S\ref{sec:ummod:46+23} (see Theorem \ref{thm:ummod:23/2-psig}).
We conclude in \S\ref{sec:disc} with discussions on the relation between the known modules for pure A-type and pure D-type umbral moonshine, as well as the possible physical relevance of our constructions.

\section{Meromorphic and Mock Jacobi Forms}\label{sec:mjf}

In this section we describe a relationship between meromorphic Jacobi forms and mock modular forms. This relationship goes back to work of Zwegers \cite{zwegers}, which was subsequently developed by Dabholkar--Murthy--Zagier in \cite{Dabholkar:2012nd}. Here we follow \cite{Dabholkar:2012nd} quite closely, but extend the discussion to include half-integral index, and refine it for meromorphic Jacobi forms with weight $1$.

\subsection{Elliptic Forms}\label{sec:mjf:ell}

We first discuss elliptic forms with half-integral index.
For this set $\ex(x):=e^{2\pi i x}$ and let $\HH:=\{\tau\in\CC\mid \Im(\tau)>0\}$ denote the upper half-plane.
For $m \in {1\over 2}\ZZ$ define the index $m$ {\em elliptic action} of 
$\ZZ^2$ on functions $\phi:\HH\times\CC\to \CC$ by setting
\begin{equation} \label{elliptic}
(\phi\lvert_{m} (\lambda,\mu) )(\tau,z) := e\left( m\lambda^2 \tau + 2m\lambda z + {(\lambda +\mu) m}\right) \, \phi(\tau, z+\lambda \tau +\mu)
\end{equation}
for $(\lambda,\mu)\in \ZZ^2$. With this definition we call a smooth function $\phi:\HH\times \CC\to \CC$ an {\em elliptic form}\footnote{Our nomenclature follows \cite{Dabholkar:2012nd}, except that elliptic forms are assumed to be holomorphic in that work, and the focus there is on integral index.} 
with index $m$ if $(\phi\lvert_{m} (\lambda,\mu) ) = \phi$ for all $(\lambda,\mu)\in \ZZ^2$. 
Write $\E_m$ for the space of elliptic forms of index $m\in\frac12\ZZ$.

For $m\in{1\over 2}\ZZ$ and $r \in \ZZ+m$ define the {\em index $m$ theta function} $\theta_{m,r}:\HH\times \CC\to \CC$ by setting 
\begin{gather}\label{eqn:thetamr}
\theta_{m,r}(\tau,z):=\sum_{\substack{\ell\in\ZZ+m\\\ell=r\xmod 2m}}\ex(m\ell)y^{\ell}q^{\frac{\ell^2}{4m}}
\end{gather}
where $q:=\ex(\tau)$ and $y:=\ex(z)$. 
Also 
define $ \theta^1_{m,r}(\t) := \frac{1}{2\p i}{\pa \over \pa z} \theta_{m,r}(\t,z) \lvert_{z=0}$. 
Then a smooth function $\phi:\HH\times \CC\to \CC$ belongs to $\E_m$ if and only if it can be written in the form 
\begin{gather}\label{eqn:thetadec}
\phi(\tau,z)=\sum_{r\xmod 2m}h_r(\tau)\theta_{m,r}(\tau,z)
\end{gather}
for some $2m$ {\em theta coefficient} functions $h_r:\HH\to \CC$. We emphasise here that $m\in\frac12\ZZ$, and the summation in (\ref{eqn:thetadec}) is over representatives for $\ZZ+m$ modulo translation by $2m$. 

For an explicit example note that the $m=r=\frac12$ case of (\ref{eqn:thetamr}) recovers a classical Jacobi theta function
\begin{gather}\label{eqn:Jacobithetaone}
\begin{split}
	\theta_{\frac12,\frac12}(\tau,z)
	&=\sum_{n\in\ZZ}i(-1)^{n+1}y^{n+\frac12}q^{\frac12(n+\frac12)^2}\\
	&=-iy^{\frac12}q^{\frac18}\prod_{n>0}(1-y^{-1}q^{n-1})(1-yq^n)(1-q^n).
\end{split}
\end{gather}
We will denote $\theta_{\frac12,\frac12}(\tau,z)$ by $\theta_1(\tau,z)$ in \S\S\ref{sec:ummod},\ref{sec:disc} (cf. (\ref{eqn:ummod:10+5-jactheta})).

Elliptic forms come equipped with some naturally defined symmetry. To explain this 
define a function 
$m\mapsto \tilde{m}$ on $\frac12\ZZ$ by setting
\begin{gather}\label{eqn:tildem}
	\tilde{m}:=\begin{cases} m&\text{ if $m\in\ZZ$,}\\ 2m&\text{ if $m\in\ZZ+\frac12$.}\end{cases}
\end{gather}
Then the group $O_{\tilde{m}}:=\{a\in \ZZ/2\tilde{m}\ZZ\mid a^2=1\xmod 4\tilde{m}\}$ acts naturally on ${\E}_m$ via
\begin{gather}
	\phi\cdot a := \sum_{r\xmod 2m} h_r\theta_{m,ra}
\end{gather}
when $\phi=\sum_{r\xmod 2m}h_r\theta_{m,r}$ is the theta decomposition (\ref{eqn:thetadec}) of $\phi\in {\E}_m$. 

It develops that $O_{\tilde{m}}$ is naturally isomorphic to the group $\Ex(\tilde{m})$ of exact divisors of $\tilde{m}$. We obtain an explicit isomorphism $O_{\tilde m}\to \Ex(\tilde m)$, to be denoted $n\mapsto a(n)$, by letting $a(n)$ be the unique element of $\ZZ/2\tilde{m}\ZZ$ such that $a(n)=1\xmod 2\frac{\tilde{m}}{n}$ and $a(n)=-1\xmod 2n$.
With this definition we have 
\begin{gather}
\phi|\W_m(n)=\phi\cdot a(n)
\end{gather}
for $\phi\in \E_m$ and $n\in \Ex(\tilde{m})$, where 
\begin{gather}\label{eqn:Wmn}
	(\phi|\W_m(n))(\tau,z):=\tfrac1n\sum_{a,b=0}^{n-1}\ex(m(\tfrac{a^2}{n^2}\tau+2\tfrac{a}{n}z+\tfrac{ab}{n^2}+ab+a+b))\phi(\tau,z+\tfrac{a}{n}\tau+\tfrac{b}{n}).
\end{gather}

We call $\W_m(n)$ an {\em Eichler--Zagier operator} of index $m$. The $\W_m(n)$ for integral index first appeared in \cite{eichler_zagier}. Note that (\ref{eqn:Wmn}) is well-defined and stabilises $\E_m$ for $n$ an arbitrary divisor of $\tilde{m}$. 

\subsection{Meromorphic Jacobi Forms}\label{sec:mjf:mjf}

We now explain precisely what we mean by a meromorphic Jacobi form. 
Recall that a {\em holomorphic Jacobi form} of weight $k\in\ZZ$ and index $m\in\frac12\ZZ$ for a group $\Gamma<\SL_2(\ZZ)$ is an elliptic form $\phi=\sum_{r\xmod 2m} h_r\theta_{m,r}$ of index $m$ 
that is invariant for the weight $k$ index $m$ {\em modular action} 
\begin{gather}\label{eqn:modularweightkaction}
\left(\f|_{k,m}\left(\begin{smallmatrix}a&b\\c&d\end{smallmatrix}\right)\right)(\tau,z):=
\f\left(\tfrac{a\t+b}{c\t+d},\tfrac{z}{c\t+d}\right)
e\left(- m \tfrac{cz^2}{c\tau+d}\right)
(c\tau+d)^{-k}
\end{gather}
of $\left(\begin{smallmatrix}a&b\\c&d\end{smallmatrix}\right)\in \Gamma$, and whose theta coefficients (\ref{eqn:thetadec}) are holomorphic and bounded near any cusp of $\Gamma$.
A {\em skew-holomorphic Jacobi form} is defined similarly, just replacing (\ref{eqn:modularweightkaction}) with the weight $k$ index $m$ {\em skew-modular action}
\begin{gather}\label{eqn:skewmodularweightkaction}
\left(\f|_{k,m}^\sk\left(\begin{smallmatrix}a&b\\c&d\end{smallmatrix}\right)\right)(\tau,z):=
\f\left(\tfrac{a\t+b}{c\t+d},\tfrac{z}{c\t+d}\right)
e\left(- m \tfrac{cz^2}{c\tau+d}\right)
(c\tau+d)^{-k+1}|c\tau+d|^{-1}
\end{gather}
of $\left(\begin{smallmatrix}a&b\\c&d\end{smallmatrix}\right)\in \Gamma$, and requiring the theta coefficients (\ref{eqn:thetadec}) to be anti-holomorphic and bounded near any cusp of $\Gamma$.
For us a {\em meromorphic Jacobi form} 
is a quotient $\psi=\frac{\phi_1}{\phi_2}$ of holomorphic Jacobi forms 
such that for any fixed $\tau\in \HH$ the function 
$z\mapsto\phi_2(\tau,z)$ is not identically zero. With this definition $z\mapsto\psi(\tau,z)$ 
is meromorphic on $\CC$ for all $\tau\in \HH$.
If $k_i$ and $m_i$ are the weight and index (respectively) of $\phi_i$ then $k=k_1-k_2$ and $m=m_1-m_2$ are the weight and index (respectively) of $\psi$. 

A {\em weak holomorphic Jacobi form} of weight $k$ and index $m$ is a holomorphic function $\phi\in \E_m$ that is invariant for the weight $k$ modular action (\ref{eqn:modularweightkaction}), and is such that $\tau\mapsto(\phi|_{k,m}\gamma)(\tau,z)$ is bounded as $\Im(\tau)\to \infty$ for any fixed $z\in \CC$ and $\gamma\in \SL_2(\ZZ)$.
We will say that a meromorphic Jacobi form $\psi$ of weight $k$ and index $m$ is a {\em weak meromorphic Jacobi form} if, for fixed $z\in\CC$ and arbitrary $\gamma\in \SL_2(\ZZ)$, the function 
$\tau\mapsto(\psi|_{k,m}\gamma)(\tau,z)$ is bounded as $\Im(\tau)\to \infty$ along any path that avoids poles. In general we have that $\tau\mapsto (\psi|_{k,m}\gamma)(\tau,z)$ is $O(e^{C\Im(\tau)})$ as $\Im(\tau)\to \infty$ (along paths that avoid poles), for some $C>0$ that is independent of $\gamma$ and $z$.
In the rest of this section we will focus on meromorphic Jacobi forms for $\SL_2(\ZZ)$, but meromorphic Jacobi forms for proper subgroups $\Gamma<\SL_2(\ZZ)$ will appear in \S\ref{sec:ummod}.

If $\psi$ is a meromorphic Jacobi form then from invariance under the index $m$ elliptic action (\ref{elliptic}) it follows that the function $z\mapsto \psi(\tau,z)\ex(-rz)$ is invariant under $z\mapsto z+1$ for $r\in \ZZ+m$. So for fixed $\tau\in \HH$ the integral 
\begin{gather}\label{eqn:hrint}
	h_r(\tau):=\ex(mr)q^{\frac{r^2}{4m}}\int_{\RR/\ZZ}\psi(\tau,z-\tfrac{r}{2m}\tau)\ex(-rz){\rm d}z
\end{gather}
is well-defined for $r\in \ZZ+m$, so long as $z\mapsto\psi(\tau,z)$ has no poles of the form $z=-\tfrac{r}{2m}\tau+\b$ for $\b\in \RR$. If $z=-\tfrac{r}{2m}\tau+\b$ is a pole for some $\b\in \RR$ then choose $\b_0\in \RR$ so that $z=-\tfrac{r}{2m}\tau+\b_0$ is not a pole, let $\gamma$ be a deformation of the line segment $[\b_0,\b_0+1]$ that passes just above the poles of $z\mapsto \psi(\tau,z)$, let $\bar{\gamma}$ be the image of $\gamma$ under complex conjugation, and replace $\int_{\RR/\ZZ}$ with $\frac12\left(\int_{\gamma}+\int_{\bar\gamma}\right)$ in (\ref{eqn:hrint}). For $m\in \ZZ$ this recovers the definition (8.2) in \cite{Dabholkar:2012nd}. For general $m\in \frac12\ZZ$ we check using the behavior of $\psi$ under $z\mapsto z+\tau$ (cf. (\ref{elliptic})) that $h_r(\tau)$ depends only on $r\xmod 2m$. 

If $\psi$ happens to be holomorphic then the $h_r$ of (\ref{eqn:hrint}) are the theta coefficients (\ref{eqn:thetadec}) of $\psi$. If $\psi$ has poles it cannot admit such a decomposition, but following \cite{Dabholkar:2012nd} we may define the  {\em finite part} $\psi^F$ of $\psi$ by setting
\begin{gather}\label{eqn:psiF}
	\psi^F(\tau,z):=\sum_{r\xmod 2m}h_r(\tau)\theta_{m,r}(\tau,z),
\end{gather}
where the sum is over $r\in \ZZ+m$ modulo $2m$, and the $\theta_{m,r}$ are as in (\ref{eqn:thetamr}). 
The construction (\ref{eqn:hrint}) does not preserve modular invariance (\ref{eqn:modularweightkaction}) in general.
It will develop in \S \ref{sec:mjf:mock} that 
$\psi^F$ is a mock Jacobi form, and the $h_r$ are mock modular forms, when certain hypotheses are satisfied by $\psi$ (cf. Theorem \ref{non-thm}).

\subsection{Residues}\label{sec:mjf:polar}

In order to relate meromorphic Jacobi forms to mock modular forms we follow \cite{Dabholkar:2012nd} by restricting to meromorphic Jacobi forms 
$\psi$ such that the poles of $z\mapsto\psi(\tau,z)$, for fixed $\tau\in \HH$, are constrained to lie in the set of {\em torsion points} $\QQ\tau+\QQ$. For simplicity we further restrict to meromorphic Jacobi forms 
whose poles are simple. (Single poles and double poles are discussed in \cite{Dabholkar:2012nd}.) 
So for $m\in \frac12\ZZ$ let $J^\mst_{k,m}$ denote the space of meromorphic Jacobi forms $\psi$ of weight $k$ and index $m$ for $\Gamma=\SL_2(\ZZ)$ 
such that the poles of the functions $z\mapsto \psi(\tau,z)$ are simple, and occur only at torsion points $z\in \QQ\tau+\QQ$. 

Our next objective is to present an explicit construction of the difference $\psi^P=\psi-\psi^F$, which is called the {\em polar part} of $\psi$.
To prepare for this 
observe that $\ZZ^2\rtimes \SL_2(\ZZ)$ acts naturally from the right on $\QQ^2$ according to the rules 
\begin{gather}
\label{action_Jac_group}
(\alpha,\beta)\cdot(\lambda,\mu):=(\alpha+\lambda,\beta+\mu),\quad
(\alpha,\beta)\cdot\left(\begin{smallmatrix}a&b\\c&d\end{smallmatrix}\right):=(\alpha a+\beta c,\alpha b+\beta d).
\end{gather}
Here $(\alpha,\beta)\in\QQ^2$, $(\lambda,\mu)\in\ZZ^2$ and $\left(\begin{smallmatrix}a&b\\c&d\end{smallmatrix}\right)\in\SL_2(\ZZ)$. 
It will be useful to have a description of the orbits of this action. With this in mind we
set 
\begin{gather}\label{eqn:Sn}
S_n:=\left\{(\tfrac{k}n,\tfrac{l}n)\in\QQ^2\mid k,l\in\ZZ,\, \gcd(k,l,n)=1\right\}
\end{gather}
for $n\in \ZZ^+$. 
\begin{lem}\label{lem:Sn}
The sets $S_n$ for $n\in \ZZ^+$ are the orbits for the action of $\ZZ^2\rtimes\SL_2(\ZZ)$ on $\QQ^2$.
\end{lem}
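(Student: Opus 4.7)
The plan is to verify three things which together assert the claim: that $\QQ^2$ is the disjoint union of the $S_n$ as $n$ ranges over $\ZZp$; that each $S_n$ is stable under the action; and that the action is transitive on each $S_n$.

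The first point rests on the existence and uniqueness of a lowest common denominator. Given $(\alpha,\beta)\in\QQ^2$, let $n$ be the smallest positive integer with $n\alpha,n\beta\in\ZZ$, and write $(\alpha,\beta)=(k/n,l/n)$; minimality of $n$ forces $\gcd(k,l,n)=1$, so $(\alpha,\beta)$ lies in $S_n$, and in no other $S_{n'}$.

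To check stability I would handle the two generating pieces of the action separately. Translations by $(\lambda,\mu)\in\ZZ^2$ replace $(k,l)$ by $(k+n\lambda,\,l+n\mu)$, which leaves $\gcd(k,l,n)$ unchanged. For $\gamma=\left(\begin{smallmatrix}a&b\\c&d\end{smallmatrix}\right)\in\SL_2(\ZZ)$, I would use that $\gamma^{-1}=\left(\begin{smallmatrix}d&-b\\-c&a\end{smallmatrix}\right)$ also has integer entries: the identities $k=d(ka+lc)-c(kb+ld)$ and $l=-b(ka+lc)+a(kb+ld)$ show that any common divisor of $n$ together with the new numerators $ka+lc$ and $kb+ld$ also divides $k$ and $l$, so $\gcd(ka+lc,kb+ld,n)$ divides $\gcd(k,l,n)$. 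The symmetric argument with $\gamma^{-1}$ in place of $\gamma$ gives the reverse divisibility, so the two gcds are equal and the action stabilises $S_n$.

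The main content is transitivity on each $S_n$, which I would establish by moving an arbitrary point to the canonical representative $(1/n,0)$ in two reduction steps. Given $(k/n,l/n)\in S_n$, set $d=\gcd(k,l)$; Bezout yields $a,c\in\ZZ$ with $ak+cl=d$, and the unimodular matrix $\left(\begin{smallmatrix}a&-l/d\\c&k/d\end{smallmatrix}\right)$ sends $(k/n,l/n)$ to $(d/n,0)$. The hypothesis $\gcd(k,l,n)=1$ forces $\gcd(d,n)=1$, so a second Bezout step produces $a',k'\in\ZZ$ with $a'd-k'n=1$; the unimodular matrix $\left(\begin{smallmatrix}a'&-n\\-k'&d\end{smallmatrix}\right)$ then sends $(d/n,0)$ to $(1/n+k',-d)$, which lies in the same $\ZZ^2$-orbit as $(1/n,0)$. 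The only real thing to be careful about in this part is ensuring the auxiliary matrices have integer entries and determinant exactly $1$; apart from this bookkeeping, there is no significant obstacle in the argument.
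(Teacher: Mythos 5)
Your proof is correct and follows essentially the same route as the paper's: both establish stability of each $S_n$, transitivity on $S_n$ via Bezout-type $\SL_2(\ZZ)$ matrices combined with integer translations reducing to the representative $(\tfrac1n,0)$, and exhaustion of $\QQ^2$ via the least common denominator. The one point worth a word is the degenerate element $(0,0)$, where $d=\gcd(0,0)=0$ and your first reduction matrix is undefined; but $(0,0)\in S_1=\ZZ^2$, which is manifestly the $\ZZ^2$-orbit of $(1,0)$, so that case is trivial.
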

\begin{proof}
It is straightforward to check that $S_n$ is stable under the action of 
$\ZZ^2\rtimes\SL_2(\ZZ)$. 
To see that $\ZZ^2\rtimes\SL_2(\ZZ)$ acts transitively observe that if $s=(\tfrac kn,\tfrac ln)\in S_n$ where $k,l\in\ZZ$ and $\gcd(k,l,n)=1$ then there exists $j\in\ZZ$ such that $\gcd(k,l+jn)=1$. So after replacing $s$ with $s\cdot (0,j)$ we may assume that $\gcd(k,l)=1$. Then for $x,y\in \ZZ$ such that $kx+ly=1$ we have $s\cdot\left(\begin{smallmatrix}x&-l\\y&k\end{smallmatrix}\right)=(\frac 1n,0)$. So $S_n$ is just the orbit of $(\frac 1n,0)$ under $\ZZ^2\rtimes\SL_2(\ZZ)$.

To see that the $S_n$ exhaust $\QQ^2$ let $S\subset \QQ^2$ be an arbitrary orbit for $\ZZ^2\rtimes\SL_2(\ZZ)$. If $(\frac kn,\frac ln)\in S$ where $k,l,n\in\ZZ$ (and the components are not necessarily fractions in lowest form), then $n\alpha$ and $n\beta$ are integers for every $(\a,\b)\in S$. So there is a smallest positive integer $n$ such that $n\alpha$ and $n\beta$ are integers for every $(\a,\b)\in S$. For this $n$ there must be $(\frac kn,\frac ln)\in S$ such that $\gcd(k,l,n)=1$. Then $S=S_n$ by the above.
\end{proof}

Given $\psi\in J^\mst_{k,m}$ 
define $S(\psi)\subset \QQ^2$ to be the set of pairs $s=(\alpha,\beta)$ such that $z\mapsto \psi(\tau,z)$ has a pole 
at $z_s(\t):=\alpha\tau+\beta$ for some $\tau\in\HH$. When there is no risk of confusion we write $z_s$ in place of $z_s(\tau)$. 
For $s=(\a,\b)\in S(\psi)$ we follow \cite{Dabholkar:2012nd} in defining
\begin{gather}\label{eqn:Dstau}
D_s(\t)  := 2\p i \ex(m\a z_s) \Res_{z=z_s}\psi(\t,z).
\end{gather}
From the invariance of $\psi$ under the elliptic (\ref{elliptic}) and modular (\ref{eqn:modularweightkaction}) actions
we deduce that
\begin{align}\label{ellip_trans_S}
D_{s\cdot(\l,\mu)}(\t)  
& = \ex(m(\a\mu-\b\l+\l\m+\l+\m)) D_{s}(\t),
\\ 
\label{modular_trans_S}
D_{s\cdot\g}(\t)
&= D_{s}(\g\t) 
(c\tau+d)^{1-k}, 
\end{align}
for $(\lambda,\mu)\in\ZZ^2$ and $\g=\left(\begin{smallmatrix}*&*\\c&d\end{smallmatrix}\right)\in\SL_2(\ZZ)$. 
(See Proposition 8.3 of \cite{Dabholkar:2012nd} for details on (\ref{modular_trans_S}).) 
So in particular, $S(\psi)$ is 
stable for the action of $\ZZ^2\rtimes\SL_2(\ZZ)$,
and the $D_s(\tau)$ are modular forms of weight $k-1$. From Lemma \ref{lem:Sn} we deduce that $S(\psi)$ is a union of finitely many of the $S_n$. We formulate the modularity of the $D_s(\tau)$ more precisely as follows.

\begin{prop}\label{prop:Dsmod}
Let $\psi\in J_{k,m}^\mst$ for $k\in \ZZ$ and $m\in \frac12\ZZ$. If $s\in S(\psi)\cap S_n$ then $D_s(\tau)$ is a weakly holomorphic modular form of weight $k-1$ for $\Gamma(2n^2)$. If $\psi$ is weak 
then $D_s(\tau)$ is a holomorphic modular form of weight $k-1$ for $\Gamma(2n^2)$.
\end{prop}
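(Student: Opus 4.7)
The plan is to combine the elliptic transformation law \eqref{ellip_trans_S} with the modular transformation law \eqref{modular_trans_S} and verify that, for $\gamma \in \Gamma(2n^2)$, the resulting phase is trivial. Write $s = (k/n, l/n)$ with $\gcd(k,l,n) = 1$, and let $\gamma = \left(\begin{smallmatrix}a & b\\ c & d\end{smallmatrix}\right)\in\Gamma(2n^2)$, so that $a-1,d-1,b,c \in 2n^2\ZZ$. A direct computation using the explicit $\ZZ^2 \rtimes \SL_2(\ZZ)$ action \eqref{action_Jac_group} gives
\[
s\cdot\gamma \;=\; s + (\lambda,\mu),\qquad \lambda := \alpha(a-1)+\beta c,\quad \mu := \alpha b + \beta(d-1),
\]
and both $\lambda$ and $\mu$ lie in $2n\ZZ$. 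Applying \eqref{modular_trans_S} and then \eqref{ellip_trans_S} yields
\[
D_s(\gamma\tau)(c\tau+d)^{1-k} \;=\; D_{s\cdot\gamma}(\tau) \;=\; \ex\!\bigl(m(\alpha\mu - \beta\lambda + \lambda\mu + \lambda + \mu)\bigr)\,D_s(\tau),
\]
so the heart of the matter is to show the phase on the right equals $1$.

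For this I would exploit $2m \in \ZZ$ together with the divisibility $\lambda,\mu\in 2n\ZZ$. Writing $\lambda = 2n\lambda'$ and $\mu = 2n\mu'$ with $\lambda',\mu'\in\ZZ$, one has $m\alpha\mu = (2m)k\mu' \in \ZZ$, and symmetrically $m\beta\lambda \in \ZZ$. The cross term satisfies $m\lambda\mu = (2m)(2n^2)\lambda'\mu' \in \ZZ$, and the linear correction satisfies $m(\lambda+\mu) = (2m)n(\lambda'+\mu')\in\ZZ$. Hence each summand in the exponent is an integer and the phase is trivial, establishing the invariance $D_s|_{k-1}\gamma = D_s$ for all $\gamma\in\Gamma(2n^2)$. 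This is the step I expect to be the most delicate: it is precisely here that the half-integral index forces one to retain the $\lambda+\mu$ term in \eqref{ellip_trans_S} and hence to pass from the integral-index level $\Gamma(n^2)$ to $\Gamma(2n^2)$.

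It then remains to verify holomorphy on $\HH$ and the correct behaviour at the cusps. Since $\psi \in J^{\mst}_{k,m}$ has only simple poles at torsion points, for fixed $s\in S(\psi)$ the implicit function theorem lets us write $\psi(\tau,z) = A(\tau)/(z-z_s(\tau)) + \text{(regular in $z$)}$ locally, with $A(\tau) = 2\pi i \Res_{z = z_s}\psi(\tau,z)$ holomorphic in $\tau$; consequently $D_s$ is holomorphic on $\HH$. For the cusps, express the residue as a contour integral $\Res_{z=z_s}\psi(\tau,z) = \frac{1}{2\pi i}\oint_{|z-z_s|=\varepsilon}\psi(\tau,z)\,dz$, and control its growth via the growth estimate of $\psi$ as $\Im(\tau)\to\infty$ stated in \S\ref{sec:mjf:mjf}. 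In general $(\psi|_{k,m}\gamma_0)(\tau,z)$ is $O(e^{C\Im\tau})$ for any $\gamma_0 \in \SL_2(\ZZ)$, so $D_s$ has at most exponential growth at each cusp of $\Gamma(2n^2)$, yielding a weakly holomorphic modular form; the transformation law derived above plus the $\SL_2(\ZZ)$-invariance of the orbit $S_n$ (Lemma \ref{lem:Sn}) lets one reduce the cusp analysis to a single cusp, namely $i\infty$. When $\psi$ is weak the bound is uniform, so $D_s$ is holomorphic at every cusp and hence a holomorphic modular form of weight $k-1$ for $\Gamma(2n^2)$.
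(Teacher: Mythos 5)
Your argument is correct and essentially identical to the paper's: both reduce $s\cdot\gamma$ for $\gamma\in\Gamma(2n^2)$ to a translation $s\cdot(2n\lambda',2n\mu')$ with $\lambda',\mu'\in\ZZ$, combine (\ref{modular_trans_S}) with (\ref{ellip_trans_S}), and check that the resulting phase is trivial (the paper leaves the integrality of the exponent implicit, while you spell it out), with the cusp conditions read off from the definitions of meromorphic and weak meromorphic Jacobi forms. No gaps.
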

\begin{proof}
The growth conditions on $D_s(\tau)$ near cusps follow from the definitions of meromorphic and weak meromorphic Jacobi form (cf. \S\ref{sec:mjf:mjf}). To check modularity write $s=(\frac kn,\frac ln)$ where $\gcd(k,l,n)=1$ and 
let $\gamma=\left(\begin{smallmatrix}1+2n^2a'&2n^2b'\\2n^2c'&1+2n^2d'\end{smallmatrix}\right)\in\Gamma(2n^2)$. 
Then
\begin{gather}
s\cdot \gamma=(\tfrac kn+2n\lambda,\tfrac ln+2n\mu)=s\cdot (2n\lambda,2n\mu)
\end{gather}
where $\lambda=ka'+lc'$ and $\mu=kb'+ld'$ are integers. Now $D_{s\cdot(2n\lambda,2n\mu)}(\tau)=D_s(\tau)$ according to (\ref{ellip_trans_S}), so from (\ref{modular_trans_S}) we have
\begin{gather}
	D_s(\gamma\tau)(c\tau+d)^{1-k}=D_{s\cdot\gamma}(\tau)=D_{s\cdot(2n\lambda,2n\mu)}(\tau)=D_s(\tau)
\end{gather}
where $c=2n^2c'$ and $d=1+2n^2d'$. This proves the claim.
\end{proof}
The next result will be applied in \S\ref{sec:mjf:mock}. 
\begin{lem}\label{lem:DsFou}
Let $\psi\in J_{k,m}^\mst$ for $m\in \frac12\ZZ$ and suppose that $ S_n\subset S(\psi)$. Then for $s=(\frac 1n,0)$ the Fourier expansion of $D_{s}(\tau)$ takes the form
\begin{gather}\label{eqn:DsFou}
	D_s(\tau) = \sum_{\substack{N\in \ZZ\\N=2m(n+1)\xmod 2n}} c_s(N)q^{\frac{N}{2n^2}}.
\end{gather}
In particular, the constant term of $D_s(\tau)$ vanishes unless $n$ divides $\tilde{m}$.
\end{lem}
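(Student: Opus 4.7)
The plan is to exploit the coincidence that, under the action (\ref{action_Jac_group}), both $T^n \in \SL_2(\ZZ)$ and $(0,1) \in \ZZ^2$ send $s = (\tfrac1n, 0)$ to the same point $(\tfrac1n, 1)\in\QQ^2$. Since $D$ is indexed by elements of $S(\psi)\subset\QQ^2$ (rather than by the group element used to reach them), the two transformation formulas (\ref{modular_trans_S}) and (\ref{ellip_trans_S}) must both describe the same function $D_{(1/n,\,1)}(\tau)$. Applying (\ref{modular_trans_S}) with $\gamma=T^n$ (so $c=0$, $d=1$) gives $D_{s\cdot T^n}(\tau) = D_s(\tau+n)$, while (\ref{ellip_trans_S}) with $(\lambda,\mu)=(0,1)$ and $(\alpha,\beta)=(\tfrac1n,0)$ gives $D_{s\cdot(0,1)}(\tau)=\ex(m(\tfrac1n+1))D_s(\tau)$. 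Equating yields the functional equation
\begin{equation}
D_s(\tau+n) \;=\; \ex\!\left(\tfrac{m(n+1)}{n}\right) D_s(\tau).
\end{equation}

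Next, by Proposition \ref{prop:Dsmod}, $D_s$ is a (weakly) holomorphic modular form of weight $k-1$ for $\Gamma(2n^2)$, and so admits a Fourier expansion $D_s(\tau)=\sum_{N\in\ZZ} c_s(N)\,q^{N/(2n^2)}$. Substituting $\tau\mapsto \tau+n$ in this expansion and comparing coefficients against the functional equation forces $\ex(N/(2n))=\ex(m(n+1)/n)$ whenever $c_s(N)\neq 0$, which is equivalent to $N\equiv 2m(n+1)\pmod{2n}$. This is precisely the restriction in (\ref{eqn:DsFou}).

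For the constant-term statement I specialise to $N=0$: nonvanishing of $c_s(0)$ then requires $2n\mid 2m(n+1)$ as integer divisibility, i.e.\ $m(n+1)/n\in\ZZ$. If $m\in\ZZ$ then $\tilde{m}=m$ and the condition reads $n\mid m(n+1)$, whence $n\mid\tilde{m}$ since $\gcd(n,n+1)=1$; if $m\in\ZZ+\tfrac12$ then $\tilde{m}=2m$ is an odd integer and the condition rewrites as $2n\mid\tilde{m}(n+1)$, which in particular implies $n\mid\tilde{m}(n+1)$ and hence $n\mid\tilde{m}$ again. The only subtle point throughout is recognising the compatibility $s\cdot T^n=s\cdot(0,1)$, which is a special feature of the base point $s=(\tfrac1n,0)$; once that observation is in hand, the rest is routine manipulation of Fourier series and elementary congruences, so I do not anticipate any serious obstacle.
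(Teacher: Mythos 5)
Your proof is correct and follows essentially the same route as the paper: the key step in both is the observation that $s\cdot\left(\begin{smallmatrix}1&n\\0&1\end{smallmatrix}\right)=s\cdot(0,1)$ for $s=(\tfrac1n,0)$, which combined with (\ref{ellip_trans_S}) and (\ref{modular_trans_S}) yields $D_s(\tau+n)=\ex(m(\tfrac1n+1))D_s(\tau)$ and hence the congruence on Fourier exponents. Your case analysis for the constant term (using $\gcd(n,n+1)=1$ and the parity of $\tilde m$) likewise matches the paper's argument.
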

\begin{proof}
By Proposition \ref{prop:Dsmod} we know that $D_s(\tau)$ has a Fourier expansion of the form $D_s(\tau)=\sum_{N\in\ZZ}c_s(N)q^{\frac{N}{2n^2}}$. We also have $s\cdot(0,1) = s\cdot \gamma$ for $\gamma=\left(\begin{smallmatrix}1&n\\0&1\end{smallmatrix}\right)$. So applying (\ref{ellip_trans_S}) and (\ref{modular_trans_S}) we obtain
\begin{gather}
	D_s(\tau+n) = D_{s\cdot\gamma}(\tau)= D_{s\cdot (0,1)}(\tau) = \ex (m(\tfrac1n+1))D_s(\tau).
\end{gather}
This shows that $c_s(N)=0$ unless $\frac{N}{2n}=m\frac{n+1}{n}\xmod 1$. This is the same as $N=2m(n+1)\xmod 2n$ so (\ref{eqn:DsFou}) holds. Of course $2m(n+1)=2m\xmod 2n$ if $m\in\ZZ$ so $c_s(0)$ must vanish unless $n$ divides $m=\tilde{m}$ when $m\in \ZZ$. If $m\in \ZZ+\frac12$ then $\tilde{m}=2m$ so $c_s(0)=0$ unless $\tilde{m}(n+1)$ is divisible by $2n$. This forces $n$ to be a divisor of $\tilde{m}$. The proof is complete.
\end{proof}

Roughly speaking, we will obtain $\psi^P$ by averaging the residues of $z\mapsto \psi(\tau,z)$ that appear in some fundamental parallelogram for the action of $\ZZ\tau+\ZZ$ on $\CC$.
In order to realise this we define the index $m$ {\em averaging operator}
\begin{gather}\label{eqn:AvmFy}
\Av_m(F(y)) := \sum_{k\in \ZZ} (-1)^{2mk} q^{mk^2}y^{2mk} F(q^k y).
\end{gather} 
For the purpose at hand, $F(y)$ will be a rational function in $y$ when $m\in \ZZ$, and a rational function in $y^{\frac12}$ when $m\in \ZZ+\frac12$. 
Indeed, for $m\in\frac12\ZZ$ and $c\in \QQ$ we set
\begin{gather}\label{eqn:Rmc}
{\cal R}_{m,c}(y) := 
\begin{cases}
y^c\dfrac{1}{2}\dfrac{y+1}{y-1}   & \text{ if $m-c \in \ZZ$,}\\
y^{\lceil c\rceil}\dfrac{1}{y-1}   & \text{ if $m \in \ZZ$, $c\notin\ZZ$,}\\
y^{\lceil c-\frac12\rceil}\dfrac{y^{\frac12}}{y-1}   & \text{ if $m \in \ZZ+\frac12$, $c\notin\ZZ+\frac12$.} 
\end{cases} 
\end{gather}
We then define the {\em universal Appell--Lerch sum} 
${\cal A}^s_m(\t,z)$ for $m\in\frac12\ZZ$ and $s=(\a,\b)\in \QQ^2$ by setting
\begin{gather}\label{eqn:Asm}
{\cal A}^{s}_m(\t,z):= \ex(-m\a z_s) \Av_m({\cal R}_{m,-2m\a}(yy_s^{-1}))
\end{gather}
where $y_s:=\ex(z_s)=\ex(\beta)q^\a$. Note that (\ref{eqn:Asm}) reduces to (8.17) of \cite{Dabholkar:2012nd} when $m\in \ZZ$. Also, the function ${\cal R}_{m,c}$ admits a more uniform description as a series:
\begin{gather}
	{\cal R}_{m,c}(y) = 
	\begin{cases}
		-\sum_{\ell\geq c} y^{\ell}(1-\frac12\delta_{\ell,c})&\text{ for $|y|<1$,}\\
		\quad\sum_{\ell\leq c} y^{\ell}(1-\frac12\delta_{\ell,c})&\text{ for $|y|>1$,}
	\end{cases}
\end{gather}
where the summations are over $\ell\in \ZZ+m$. 

Using the identity ${\cal R}_{m,c+k}(y)={\cal R}_{m,c}(y)y^k$ for $k\in \ZZ$ we obtain 
\begin{gather}\label{eqn:Amstrans}
	{\cal A}_m^{s\cdot(\lambda,\mu)}(\tau,z)=\ex(-m(\alpha\mu-\beta\lambda+\lambda\mu+\lambda+mu)){\cal A}_m^s(\tau,z)
\end{gather}
for $s=(\a,\b)\in \QQ^2$ and $(\lambda,\mu)\in \ZZ^2$. So comparing (\ref{ellip_trans_S}) with (\ref{eqn:Amstrans}) we see that it makes sense to define
\begin{gather}\label{eqn:psiP}
	\psi^P(\tau,z):=\sum_{s\in S(\psi)/\ZZ^2}D_s(\tau){\cal A}_m^s(\tau,z)
\end{gather}
where the sum is over any set of representatives for the action of $\ZZ^2$ on $S(\psi)$.

The next result recovers Theorem 8.1 of \cite{Dabholkar:2012nd} when $m\in \ZZ$. Given the definitions we have made, the proof for $m\in \ZZ+\frac12$ is essentially the same.
\begin{thm}\label{thm:mjf:polarfinitedec}
Let $\psi\in J^\mst_{k,m}$ for $k\in \ZZ$ and $m\in \frac12\ZZ$. Then we have
\begin{gather}
	\psi(\tau,z)=\psi^F(\tau,z)+\psi^P(\tau,z),
\end{gather} 
where $\psi^F$ is defined by (\ref{eqn:psiF}) and $\psi^P$ is defined by (\ref{eqn:psiP}).
\end{thm}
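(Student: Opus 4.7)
The plan is to follow the proof of Theorem 8.1 of \cite{Dabholkar:2012nd}, verifying that the argument extends to $m\in\ZZ+\frac12$ once the extra signs and half-integral powers are accounted for. The strategy has three parts: show that $\psi^P$ has the same singular locus and residues as $\psi$; show that $\psi^P$ is itself elliptic of index $m$, so that $\psi-\psi^P$ is holomorphic and elliptic and hence admits a theta decomposition; and finally identify the theta coefficients of $\psi-\psi^P$ with the $h_r$ of (\ref{eqn:hrint}).

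First I would analyse the singularities of ${\cal A}_m^s$. The $k=0$ summand in the averaging operator (\ref{eqn:AvmFy}) produces the only pole in a fundamental parallelogram for $\ZZ\tau+\ZZ$, located at $z=z_s$. A case-by-case inspection of (\ref{eqn:Rmc}) gives $\Res_{y=1}{\cal R}_{m,c}(y)=1$ in all three cases (in the half-integral case the factor $y^{\frac12}$ specialises to $1$ at $y=1$); combined with the local expansion $y/y_s-1=2\pi i(z-z_s)+O((z-z_s)^2)$ this yields $\Res_{z=z_s}{\cal A}_m^s(\tau,z)=\frac{1}{2\pi i}\ex(-m\alpha z_s)$, and multiplying by $D_s(\tau)$ and comparing with (\ref{eqn:Dstau}) matches $\Res_{z=z_s}\psi(\tau,z)$. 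Summing the $\ZZ^2$-translates of $z_s$ produced by the $k\neq 0$ terms in $\Av_m$ and by shifts of $z$, together with the residue transformation (\ref{ellip_trans_S}), then covers every pole of $\psi$.

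Next I would check the ellipticity of $\psi^P$. The transformation rule (\ref{eqn:Amstrans}) combined with (\ref{ellip_trans_S}) shows that $D_s(\tau){\cal A}_m^s(\tau,z)$ depends only on the class of $s$ modulo $\ZZ^2$, so the sum in (\ref{eqn:psiP}) is well-defined. A direct calculation with (\ref{eqn:AvmFy}) shows that $z\mapsto\Av_m(F(y))$ is elliptic of index $m$ for any $F$; here the sign $(-1)^{-2m}=\ex(-m)$ matches the elliptic action (\ref{elliptic}) in both the integral ($+1$) and half-integral ($-1$) cases, so the argument is uniform. Combining with the residue computation, $\psi-\psi^P$ is holomorphic on $\HH\times\CC$ and elliptic of index $m$, and therefore admits a theta decomposition $\sum_r\tilde h_r\theta_{m,r}$. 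Since $\psi-\psi^P$ has no poles, the straight-line integral gives $\tilde h_r(\tau)=\ex(mr)q^{r^2/4m}\int_{\RR/\ZZ}(\psi-\psi^P)(\tau,z-\frac{r}{2m}\tau)\ex(-rz)\,dz$, and matching with (\ref{eqn:hrint}) reduces to the identity that the symmetric contour average of $\psi^P$ against $\ex(-rz)$ at height $-\frac{r}{2m}\Im(\tau)$ equals the straight-line integral of $\psi^P$; this is a residue calculation that follows from the two Laurent expansions of ${\cal R}_{m,c}(y)$ in the regions $|y|<1$ and $|y|>1$ displayed just after (\ref{eqn:Asm}).

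The main obstacle is the bookkeeping of signs in the half-integral case: one must verify that the factors $(-1)^{2mk}$ in (\ref{eqn:AvmFy}) and the extra $y^{\frac12}$ in ${\cal R}_{m,c}$ conspire to produce the correct elliptic sign $\ex(-m)=-1$ under $z\mapsto z+\tau$, which is what distinguishes the half-integral setting from the integral one treated in \cite{Dabholkar:2012nd}. Once this is done the remaining steps proceed as in loc. cit.
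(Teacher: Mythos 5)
Your proposal is correct and follows the same route as the paper, which in fact gives no independent argument: it simply observes that the statement recovers Theorem 8.1 of Dabholkar--Murthy--Zagier for $m\in\ZZ$ and that, given the modified definitions of $\Av_m$, ${\cal R}_{m,c}$ and ${\cal A}^s_m$, the proof for $m\in\ZZ+\frac12$ is essentially the same. The sign bookkeeping you outline for the half-integral case (the factor $(-1)^{2mk}$ in $\Av_m$ and the extra $y^{\frac12}$ in ${\cal R}_{m,c}$ conspiring to give $\ex(-m)=-1$ under the elliptic action) is precisely the verification the paper leaves implicit.
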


\subsection{Mock Modularity}\label{sec:mjf:mock}

It is shown in \S8.3 of \cite{Dabholkar:2012nd} that if $m\in \ZZ$ then the theta coefficients $h_r$ (cf. (\ref{eqn:hrint})) of the finite part $\psi^F$ (cf. (\ref{eqn:psiF})) of a meromorphic Jacobi form $\psi\in J_{k,m}^\mst$ are (mixed) mock modular forms, whose shadows admit concrete expressions in terms of the residue forms $D_s$ (cf. (\ref{eqn:Dstau})) and the theta functions $\theta_{m,r}^1$ (cf. (\ref{eqn:thetamr})).
Our focus in this work is on weak meromorphic Jacobi forms of weight $1$ (with half-integral index), so in this section we discuss mock modularity in that specific setting. We begin by showing that $S(\psi)$ and the $D_s$ are greatly constrained by these hypotheses.
\begin{prop}\label{prop:Dswt1}
If $\psi$ is a weak meromorphic Jacobi form in $J^\mst_{1,m}$ for $m\in \frac12\ZZ$ then $D_s(\tau)$ is a constant function for all $s\in S(\psi)$, and if $S_n\subset S(\psi)$ then $n$ divides $\tilde{m}$.
\end{prop}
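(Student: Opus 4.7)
The plan is to combine the modularity statement of Proposition \ref{prop:Dsmod} with the Fourier expansion constraint of Lemma \ref{lem:DsFou}. My first step is to set $k=1$ in Proposition \ref{prop:Dsmod} and use the hypothesis that $\psi$ is weak, which promotes the conclusion there from ``weakly holomorphic'' to \emph{holomorphic}: for any $s\in S(\psi)\cap S_n$, the function $D_s$ becomes a holomorphic modular form of weight $k-1=0$ for $\Gamma(2n^2)$. I will then invoke the standard fact that any holomorphic weight-zero modular form on a congruence subgroup of $\SL_2(\ZZ)$ is a constant, since it descends to a holomorphic function on the associated compact modular curve. Because $S(\psi)$ is stable under $\ZZ^2\rtimes\SL_2(\ZZ)$ (as noted after (\ref{modular_trans_S})) and is therefore a finite union of orbits $S_n$ by Lemma \ref{lem:Sn}, this delivers the first assertion: $D_s(\tau)$ is constant for every $s\in S(\psi)$.

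For the divisibility assertion I will specialise to the distinguished representative $s_0=(\tfrac1n,0)\in S_n$, under the hypothesis $S_n\subset S(\psi)$. Since $s_0\in S(\psi)$, by the definition of $S(\psi)$ there exists some $\tau_0\in\HH$ at which $z\mapsto\psi(\tau_0,z)$ has a pole at $z_{s_0}(\tau_0)$, so $\Res_{z=z_{s_0}(\tau_0)}\psi(\tau_0,z)\neq0$; the definition (\ref{eqn:Dstau}) then gives $D_{s_0}(\tau_0)\neq0$. Combined with the constancy just established, $D_{s_0}$ is a \emph{nonzero} constant, so its constant Fourier coefficient $c_{s_0}(0)$ is nonzero. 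Lemma \ref{lem:DsFou} then forces $n\mid\tilde m$, completing the proof.

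I do not anticipate any real obstacle in executing this plan: the conceptual content lies in the observation that weight one is precisely the weight at which $D_s$ has weight zero and is thereby collapsed to a single scalar per orbit, after which both assertions become formal consequences of previously established modularity and Fourier support. The only slightly nontrivial ingredient is the appeal to the compactness of the modular curve to identify holomorphic weight-zero modular forms with constants, which I regard as a standard step that needs no further elaboration.
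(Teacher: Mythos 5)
Your proposal is correct and follows essentially the same route as the paper: Proposition \ref{prop:Dsmod} with $k=1$ and the weakness hypothesis gives a holomorphic weight-zero form, hence a constant, which is nonzero because a simple pole has nonzero residue, and Lemma \ref{lem:DsFou} then forces $n\mid\tilde m$. The only cosmetic difference is that you evaluate directly at the representative $(\tfrac1n,0)$, whereas the paper passes from a general $s\in S_n$ to that representative via transitivity of $\ZZ^2\rtimes\SL_2(\ZZ)$; the two are equivalent since $S(\psi)\cap S_n\neq\emptyset$ iff $S_n\subset S(\psi)$.
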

\begin{proof}
Let $s\in S(\psi)$. Then $s\in S_n$ for some $n$, and Proposition \ref{prop:Dsmod} shows that $D_s(\tau)$ is a bounded weakly holomorphic modular form of weight $0$ for $\Gamma(2n^2)$. We conclude that $D_s(\tau)=D_s$ is constant. 
Note that $D_s\neq 0$ for otherwise $s$ would not be in $S(\psi)$. From (\ref{ellip_trans_S}) and (\ref{modular_trans_S}) and the fact that $\ZZ^2\rtimes\SL_2(\ZZ)$ acts transitively on $S_n$ (Lemma \ref{lem:Sn}) we have that $D_s$ is a non-zero constant times $D_{(\frac 1n,0)}$. Applying Lemma \ref{lem:DsFou} we 
obtain that $n$ divides $\tilde{m}$ (recall (\ref{eqn:tildem})). The proof is complete.
\end{proof}

We now aim to show that if $m\in\frac12\ZZ$ and $\psi\in J^\mst_{1,m}$ is weak then the finite part $\psi^F$ is a mock Jacobi form of weight $1$, and its theta coefficients $h_r$ constitute a vector-valued mock modular form $h=(h_r)$ of weight $\frac12$.
To identify their shadows let us focus momentarily on the poles lying on lattice points $\ZZ\t+\ZZ$.
For half-integral $m$, the mock modular property of the polar part (and hence also the finite part) 
can be analyzed using properties of the higher level Appell functions introduced by Zwegers (see Definition 1.3. of \cite{ZWEGERS2010MultivariableAF}). 
Define 
\be
\mu_{{m},0}(\t,z):={\cal A}_m^{(0,0)}(\t,z),
\ee
and define its {\em completion} by setting 
\be\label{eqn:mucomp}
\hat\mu_{{m},0}(\t,z):=\mu_{{m},0}(\t,z)+ {i\over 2\sqrt{2m}}\sum_{r\xmod 2m} \th_{m,r}(\t,z) 	  \int_{-\bar\tau}^{i\inf} {\rm d}\t' {\theta^1_{m,r}(\t')\over\sqrt{-i(\t+\t')}}
\ee
where the sum is over $r\in \ZZ+m$.
Then $\hat\mu_{m,0}$ transforms as a Jacobi form of weight $1$ and index $m$. That is, $\hat\mu_{m,0}$ belongs to $\E_m$ (cf. \S\ref{sec:mjf:ell}) and is invariant for the weight $1$ modular action (\ref{eqn:modularweightkaction}) of $\SL_2(\ZZ)$ on $\E_m$. 

From (\ref{eqn:mucomp}) we see that if $\psi$ as in Proposition \ref{prop:Dswt1} has poles only at lattice points (i.e. $S(\psi)=S_1=\ZZ^2$), then the shadow of $h_r$ 
is proportional to the theta series $\theta^1_{m,r}(\tau)$ 
of weight $\frac32$, and the shadow of $\psi^F$ 
is proportional to the skew-holomorphic Jacobi form 
$\sum_{r\xmod 2m}\overline{\theta_{m,r}^1(\tau)}\theta_{m,r}(\tau,z)$
of weight $2$ and index $m$.

To describe the shadows of forms with poles at general torsion points we first note that for $m\in \frac12\ZZ$ and $n$ a divisor of $\tilde{m}$ we have
\be\label{eqn:thetamrWmn}
 (\theta_{m,r}\lvert {\cal W}_m{(n)})(\t,z) = \sum_{r'\xmod 2m } (\Omega_m(n)_{r,r'}) \theta_{m,r'}(\t,z)  
\ee
where $\W_m(n)$ is as in (\ref{eqn:Wmn}), and $\O_m(n)=(\O_m(n)_{r,r'})$ is the $2m\times 2m$ {\em Omega matrix} defined by setting
\begin{align}\label{def:OmegaMatrices}
\Omega_{m}(n)_{r,r'} := \begin{cases} \d^{[2n]}_{r+r',0} \d^{[{2m\over n}]}_{r-r',0} &\text{ if $m\in \ZZ$,} \\ 
\d^{[n]}_{r+r',0} \d^{[{2m\over n}]}_{r-r',0} 
&\text{ if $m\in \ZZ+{1\over 2}$.}
\end{cases}
\end{align}
In (\ref{def:OmegaMatrices}) the indices $r$ and $r'$ range over $\ZZ+m$ modulo translation by $2m$, and we use the notation 
\be
\d^{[N]}_{a,b} := \begin{cases} 1 & \text{ if $a=b\xmod{N}$,} \\ 0 & \text{ otherwise,}\end{cases}
\ee
for $a,b,N\in \ZZ$. For convenience let us also define the (non-holomorphic) {\em Eichler integral}
\begin{gather}
\theta_{m,r}^{1,*}(\tau):=\int_{-\bar\tau}^{i\inf} {\rm d}\t' {\theta^1_{m,r}(\t')\over\sqrt{-i(\t+\t')}}
\end{gather}
for $m\in \frac12\ZZ$ and $r\in \ZZ+m$.

\begin{thm}\label{non-thm}
Let $m\in\frac12\ZZ$ and let $\psi\in J^\mst_{1,m}$ be a weak meromorphic Jacobi form. 
Then 
\be\label{eqn_polar_AL}
\psi^P = \sum_{n|\tilde m} c_n  {\cal A}^{(0,0)}_{m}\lvert {\cal W}_m(n)
\ee
for some $c_n \in \CC$. 
Moreover, 
the completion
\be
\hat \psi^P(\tau,z) : = \psi^P(\tau,z)  + {i\over 2\sqrt{2m}}\sum_{r\xmod 2m} \th_{m,r}(\t,z) 	 \sum_{n|\tilde m} c_n \sum_{r'\xmod 2m}\Omega_{m}(n)_{r,r'} 
\theta_{m,r'}^{1,*}(\tau)
\ee
transforms as a weight 1 index $m$ Jacobi form. 
\end{thm}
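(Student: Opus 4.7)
The plan is to combine Theorem \ref{thm:mjf:polarfinitedec} with Proposition \ref{prop:Dswt1} to express $\psi^P$ as an orbit sum of universal Appell--Lerch sums, then identify these orbit contributions with the Eichler--Zagier images $\mathcal{A}^{(0,0)}_m\lvert\mathcal{W}_m(n)$, and finally transfer Zwegers' completion of $\mu_{m,0}=\mathcal{A}^{(0,0)}_m$ through this identification.

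By Theorem \ref{thm:mjf:polarfinitedec}, $\psi^P=\sum_{s\in S(\psi)/\ZZ^2} D_s\,\mathcal{A}^s_m$, and by Proposition \ref{prop:Dswt1} each $D_s$ is a constant with $S(\psi)\subseteq\bigcup_{n\mid\tilde m}S_n$. Grouping the sum by the orbits $S_n$ and invoking the transitivity of the $\ZZ^2\rtimes\SL_2(\ZZ)$-action from Lemma \ref{lem:Sn}, together with (\ref{ellip_trans_S}), (\ref{modular_trans_S}) (whose weight factor is trivial for $k=1$), and (\ref{eqn:Amstrans}), one sees that the contribution on each $S_n$ is fully controlled by the single scalar $D_{(1/n,0)}$.

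The technical heart of the argument is the identity
\begin{equation*}
\mathcal{A}^{(0,0)}_m\lvert\mathcal{W}_m(n) = \sum_{d\mid n}\alpha_{n,d}\sum_{s\in S_d/\ZZ^2}\mathcal{A}^s_m,
\end{equation*}
for explicit constants $\alpha_{n,d}\in\CC$ with $\alpha_{n,n}\neq 0$. This is obtained by expanding the definition (\ref{eqn:Wmn}) of $\mathcal{W}_m(n)$: the shifts $(a/n,b/n)$ for $(a,b)\in\{0,\ldots,n-1\}^2$ parametrise representatives of $S_{n/\gcd(a,b,n)}$ modulo $\ZZ^2$, and the phase prefactors appearing in (\ref{eqn:Wmn}) match precisely those imposed by (\ref{eqn:Amstrans}) on orbit sums of the $\mathcal{A}^s_m$. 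The triangularity of $(\alpha_{n,d})$ with respect to the divisor order on $\tilde m$ then permits one to solve recursively for constants $c_n\in\CC$ such that $\psi^P=\sum_{n\mid\tilde m} c_n\,\mathcal{A}^{(0,0)}_m\lvert\mathcal{W}_m(n)$. Tracking these phases carefully, particularly the half-integral contributions arising when $m\in\ZZ+\tfrac12$, is the main obstacle; it is here that the extension from the integral index case of \cite{Dabholkar:2012nd} is not merely formal.

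For the completion statement, begin with Zwegers' fact that $\hat\mu_{m,0}$ from (\ref{eqn:mucomp}) is a weight $1$ index $m$ Jacobi form for $\SL_2(\ZZ)$. Since $\mathcal{W}_m(n)$ preserves this space of Jacobi forms and acts on the theta series in the completion term as in (\ref{eqn:thetamrWmn}), applying $\mathcal{W}_m(n)$ to $\hat\mu_{m,0}$, relabelling the dummy summation indices $r\leftrightarrow r'$, and using the symmetry $\Omega_m(n)_{r,r'}=\Omega_m(n)_{r',r}$ visible from (\ref{def:OmegaMatrices}) shows that
\begin{equation*}
\mathcal{A}^{(0,0)}_m\lvert\mathcal{W}_m(n) + \frac{i}{2\sqrt{2m}}\sum_{r\xmod 2m}\theta_{m,r}(\tau,z)\sum_{r'\xmod 2m}\Omega_m(n)_{r,r'}\theta^{1,*}_{m,r'}(\tau)
\end{equation*}
is a weight $1$ index $m$ Jacobi form. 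Taking the $\CC$-linear combination with the coefficients $c_n$ found above produces $\hat\psi^P$ as asserted, completing the proof.
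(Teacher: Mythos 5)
Your proposal is correct and follows essentially the same route as the paper: the paper likewise combines Theorem \ref{thm:mjf:polarfinitedec} and Proposition \ref{prop:Dswt1} with the transformation identities (\ref{ellip_trans_S}), (\ref{modular_trans_S}) and (\ref{eqn:Amstrans}) to rewrite $\psi^P$ as $\sum_{n|\tilde m}\tfrac{c_n}{n}\sum_{a,b=0}^{n-1}\ex(m(ab+a+b))\,\mathcal{A}^{(-a/n,-b/n)}_m$ and then matches this term-by-term against the definition (\ref{eqn:Wmn}) of $\mathcal{W}_m(n)$, deducing the second statement exactly as you do from the modularity of $\hat\mu_{m,0}$ and from (\ref{eqn:thetamrWmn}). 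Your explicit triangular system over the divisors $d\mid n$ simply makes precise the regrouping that the paper absorbs into the unspecified constants $c_n$ (with the understanding that your orbit sums $\sum_{s\in S_d/\ZZ^2}\mathcal{A}^s_m$ should carry the canonical phase-weighting, since the individual $\mathcal{A}^s_m$ depend on the choice of representatives by (\ref{eqn:Amstrans})).
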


\begin{proof}
First note that if $\phi\in \E_m$ is invariant for the weight $k$ index $m$ modular action (\ref{eqn:modularweightkaction}) of $\SL_2(\ZZ)$ then so is $\phi|\W_m(n)$ for $n$ a divisor of $\tilde{m}$. So the second statement follows from the first, together with the fact that $\hat\mu_{m,0}$ (cf. (\ref{eqn:mucomp})) transforms as a weight $1$ index $m$ Jacobi form.

For the first statement we first use (\ref{ellip_trans_S}) and (\ref{modular_trans_S}) to check that 
\be\label{D_sign}
D_{(-{a\over n},-{b\over n})} =\ex\left(m(a+1)(b+1)\right) D_{({1\over n},{1\over n})}
\ee
when $\gcd(a,b,n)=1$. Taking this together with Proposition \ref{prop:Dswt1} we obtain that
\be
\begin{split}
\psi^P(\t,z) :=&\sum_{s\in S(\psi)/\ZZ^2} D_s {\cal A}^{s}_{m}(\t,z) \\
=&    \sum_{n|\tilde m} {c_n\over n} \sum_{a,b=0}^{n-1}\ex(m(ab+a+b ))   {\cal A}^{(-\frac an,-\frac bn)}_{m}(\t,z) 
\end{split}
\ee
for some $c_n \in \CC$. Next we check that 
\begin{align}
 {\cal A}^{(-{a\over n},-{b\over n})}_{m}(\t,z) &= 
\ex\left(m\left(\tfrac{a^2}{n^2} \t + 2 \tfrac{a}{n}z +\tfrac{ab}{n^2}\right)\right)  {\cal A}^{(0,0)}\left(\t,z+\tfrac{a}{n}\t+\tfrac{b}{n}\right)
\end{align}
for $n$ a divisor of $\tilde m$. Then the first claim (\ref{eqn_polar_AL}) follows from 
the definition (\ref{eqn:Wmn}) of $\W_m(n)$.
\end{proof} 

From Theorem \ref{non-thm} we see that if $\psi$ is as in Proposition \ref{prop:Dswt1} then there are $c_n\in \CC$ for $n|\tilde{m}$ such that the shadow of $h_r$ 
is proportional to 
\begin{gather}
\sum_{n|\tilde{m}}\sum_{r'\xmod 2m} \overline{c_n}\Omega_m(n)_{r,r'}
\theta_{m,r}^1(\tau),
\end{gather}
and the shadow of $\psi^F$ 
is proportional to the skew-holomorphic Jacobi form 
\begin{gather}
\sum_{r\xmod 2m}\sum_{n|\tilde{m}}\sum_{r'\xmod 2m} c_n\Omega_m(n)_{r,r'}
\overline{\theta_{m,r'}^1(\tau)}\theta_{m,r}(\tau,z).
\end{gather}

\subsection{Umbral Meromorphic Forms}\label{sec:mjf:umbral}

Here we specialise the preceding analysis to the specific meromorphic Jacobi forms that will appear in \S\ref{sec:ummod}.

To begin we note that (\ref{eqn:mucomp}) and the identity $\th^1_{m,r} = {1\over 2} \ex({-\tfrac{r}{2}}) (\th^1_{4m,2r}+ \th^1_{4m,4m-2r})$ 
suggests a relation between half-integral meromorphic Jacobi forms and the umbral moonshine functions of pure D-type. 
To make this concrete,
define $M':=\frac12M+1$ for $M$ an even positive integer. 
Then for each 
positive $M$ such that $M=2\xmod 4$ and $M'$ is a divisor of $24$ there is a {\em pure D-type} Niemeier root system $X=D_{M'}^{\oplus d'}$ where $M'd'=24$. (In case $M=M'=2$ we should interpret $D_2$ as $A_1^{\oplus 2}$.) Given such an $M$ let $\ell$ be the symbol $M+\frac{M}{2}$. 
For instance, we have $\ell=6+3$ for $M=6$ and $M'=4$, corresponding to the Niemeier root system $D_4^{\oplus 6}$. 
We refer to $\ell$ as a lambency following \cite{UM,MUM}. 
Umbral moonshine \cite{MUM} attaches a $2M$-vector-valued mock modular form $H^{(\ell)}_g(\tau)=(H^{(\ell)}_{g,r}(\tau))_{r\xmod 2M}$ to each $g\in G^{(\ell)}$, where $G^{(\ell)}:=\Aut(N^X)/\Inn(N^X)$, for $N^X$ the Niemeier lattice with root system $X=D_{M'}^{\oplus d'}$, and $\Inn(N^X)$ the subgroup of $\Aut(N^X)$ generated by reflections in roots. 

For the particular lambencies $\ell=M+\frac{M}2$ under consideration, the mock modular property of $H^{(\ell)}_g$ can be formulated 
in the following way.
First note that $G^{(\ell)}\simeq S_d$ according to \S2.4 of \cite{MUM}. Denote the permutation character of $G^{(\ell)}\simeq S_d$ by $\chi^{(\ell)}_g$. 
(This is $\chi^{X_D}_g$ in the notation of \S B.2 of \cite{MUM}.)
Then 
for every conjugacy class $[g]$ of $G^{(\ell)}$,
\begin{align} \label{eqn:finite_polar}
\psi^{(\frac{M}4)}_{g}(\tau,z) :=
-2\chi^{(\ell)}_g {\rm Av}_{\frac{M}{4}}\left( 1\over y^{\frac12}-y^{-\frac12}\right)+
	\sum_{s =-\frac{M}{4}+1, -\frac{M}{4}+2, \dots, \frac{M}{4}}
	\ex(-\tfrac{s}2)H^{(\ell)}_{g,2s}(\tau)\theta_{\frac M4,s}(\tau,z)
\end{align}
is a meromorphic Jacobi form of weight $1$ and (half-integral)  index $\frac{M}4$, with some level, and the above decomposition corresponds precisely to the splitting of this meromorphic Jacobi form into its polar and finite parts, cf. Theorem \ref{thm:mjf:polarfinitedec}. 

Returning to the theta function identity mentioned above (take $m=\frac M4$), note that the discussion in \S\S4,5 of \cite{MUM} attaches a meromorphic Jacobi form $\psi^{(M+\frac M2)}_g$ of weight $1$ and index $M$ to each $g\in G^{(M+\frac M2)}$. While the polar parts of $\psi^{(\frac M4)}$ and $\psi^{(M+\frac M2)}$ do not seem to be related in a simple way, we do have $\psi^{(\frac{M}4),F}_{g}(\tau,2z) = \psi^{(M+{M\over 2}),F}_g(\tau,z)$ for the finite parts. The half-integral index forms $\psi^{(\frac M4)}$ seem to be better suited to explicit realisation in vertex algebraic terms.

Indeed, in \S\ref{sec:ummod} we will recover series expansions of the functions $\psi^{(\frac M4)}_g$ for $\frac{M}{4}\in\left\{\frac52,\frac72,\frac{11}2,\frac{23}2\right\}$ and $g\in G^{(M+\frac M2)}$ 
as traces on twisted modules $W^{(\frac M4)}_\tw$ for explicitly constructed vertex operator superalgebras $W^{(\frac M4)}$.

\section{Umbral Moonshine Modules}\label{sec:ummod}

In this section we present our main constructions, which are similar to those of \cite{umvan4}. We adopt all the notational conventions and terminology of \S2 of loc. cit. in what follows. 

\subsection{Lambency $10+5$}\label{sec:ummod:10+5}

Let $\gt{e}$ be a $2$-dimensional complex vector space equipped with a non-degenerate symmetric bilinear form, let $\gt{a}$ be a $4$-dimensional complex vector space equipped with a non-degenerate symmetric bilinear form, and let $\gt{b}$ be a $6$-dimensional complex vector space equipped with a non-degenerate anti-symmetric bilinear form. Choose polarisations $\gt{e}=\gt{e}^+\oplus\gt{e}^-$, $\gt{a}=\gt{a}^+\oplus\gt{a}^-$ and $\gt{b}=\gt{b}^+\oplus\gt{b}^-$, and let $\{e^\pm\}$, $\{a^\pm_i\}$ and $\{b_i^\pm\}$ be bases for $\gt{e}^\pm$, $\gt{a}^\pm$ and $\gt{b}^\pm$, respectively, such that $\lab e^-,e^+\rab=1$ and $\lab a_i^-,a_j^+\rab=\llab b_i^-,b_j^+\rab=\delta_{i,j}$.
Define a vertex operator superalgebra and a canonically twisted module for it by setting
\begin{gather}\label{eqn:ummod:4-WWtw}
\begin{split}
	W^{(\frac52)}&:=A(\gt{e})\otimes A(\gt{a})\,\otimes\uA(\gt{b}), \\
	W^{(\frac52)}_\tw&:=A(\gt{e})_\tw\otimes A(\gt{a})_\tw\,\otimes \uA(\gt{b})_\tw,
\end{split}
\end{gather}
and by equipping $W^{(\frac52)}$ with the Virasoro element $\omega^{(\frac52)}:=\omega\otimes\vv\otimes\vv+\vv\otimes\omega\otimes\vv+\vv\otimes\vv\otimes\omega$. 
Set $\jmath_\gt{e}:=\jmath\otimes\vv\otimes\vv$ and
$\jmath^{(\frac52)}:=2\vv\otimes\jmath\otimes\vv+\vv\otimes\vv\otimes \jmath$.
Then the group $\GL(\gt{e}^+)\otimes \GL(\gt{a}^+)\otimes\GL(\gt{b}^+)$ acts naturally on $W^{(\frac52)}$ and $W^{(\frac52)}_\tw$, respecting the vertex operator superalgebra module structures and preserving the bigradings defined by the zero modes of $\omega^{(\frac52)}$ and $\jmath^{(\frac52)}$.

\begin{table}[h]
\begin{center}
\begin{small}
\caption{\small Character table of ${G}^{(10+5)}\simeq S_4$}\label{tab:chars:irr:10+5}
\begin{tabular}{c|rrrrr}\toprule
$[g]$	&  1A& 2A&   3A&  2B&    4A\\
	\midrule
$[g^2]$ 	&1A&	1A&	3A&	1A&2A\\
	\midrule
${\chi}_{1}$&   $1$&   $1$&   $1$&  $1$&   $1$\\
${\chi}_{2}$&   $1$&   $1$&   $1$&   $-1$&   $-1$\\
${\chi}_{3}$&   $2$&   $2$&   $-1$&   $0$&   $0$\\
${\chi}_{4}$&   $3$&   $-1$&   $0$&   $1$&   $-1$\\
${\chi}_{5}$&   $3$&   $-1$&   $0$&   $-1$&   $1$\\
	\bottomrule
\end{tabular}
\end{small}
\end{center}
\end{table}

The group $G^{(10+5)}$ is isomorphic to $S_4$ according to \S2.4 of \cite{MUM}. Choose homomorphisms $\varrho:G^{(10+5)}\to \GL(\gt{a}^+)$ and $\uvarrho\;\;:G^{(10+5)}\to \GL(\gt{b}^+)$ such that the corresponding characters are $\chi_3$ and $\chi_4$, respectively, in the character table, Table \ref{tab:chars:irr:10+5}. 
Then the assignment $g\mapsto I\otimes \varrho(g)\,\otimes \uvarrho(g)$ defines faithful and compatible actions of $G^{(10+5)}$ on $W^{(\frac52)}$ and $W^{(\frac52)}_\tw$. Set $(-1)^F:=(-I)\otimes(-I)\otimes I$. Let $J_\gt{e}(0)$ be the coefficient of $z^{-1}$ in $Y_\tw(\jmath_\gt{e},z)$, let $J(0)$ be the coefficient of $z^{-1}$ in $Y_\tw(\jmath^{(\frac52)},z)$, and let $L(0)$ be the coefficient of $z^{-2}$ in $Y_\tw(\omega^{(\frac52)},z)$. For $g\in G^{(10+5)}$ we consider the formal series $\widetilde{\psi}^{(\frac52)}_g\in \CC[y][[y^{-1}]][[q]]$ defined by
\begin{gather}\label{eqn:ummod:7-trg}
	\widetilde{\psi}^{(\frac52)}_g:=-2\tr(gJ_\gt{e}(0)(-1)^Fy^{J(0)}q^{L(0)}|W^{(\frac52)}_\tw).
\end{gather}

\begin{thm}\label{thm:ummod:10+5-psig}
For $g\in G^{(10+5)}$ the series $\widetilde{\psi}^{(\frac52)}_g$ is the expansion of $\psi^{(\frac52)}_g$ in the domain $0<-\Im(z)<\Im(\tau)$.
\end{thm}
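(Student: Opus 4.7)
The plan is to compute the trace on the right-hand side of \eqref{eqn:ummod:7-trg} explicitly using the tensor product decomposition $W^{(\frac{5}{2})}_\tw = A(\gt{e})_\tw \otimes A(\gt{a})_\tw \otimes \uA(\gt{b})_\tw$, and then to identify the resulting expression with $\psi^{(\frac{5}{2})}_g(\tau,z)$ as defined in \eqref{eqn:finite_polar}. Since $\jmath_\gt{e}$, $\varrho(g)$ and $\uvarrho(g)$ act on separate tensor factors, and since $J(0)=2J_\gt{a}(0)+J_\gt{b}(0)$ and $L(0)=L_\gt{e}(0)+L_\gt{a}(0)+L_\gt{b}(0)$, the trace factorizes as a product of three independent traces, one over each of $A(\gt{e})_\tw$, $A(\gt{a})_\tw$ and $\uA(\gt{b})_\tw$.

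First, I would apply the standard free-field trace formulas for canonically twisted modules. The trace over $A(\gt{e})_\tw$ with the $U(1)$ current insertion $J_\gt{e}(0)$ produces (up to the factor $-2$) the classical Jacobi form $1/(y^{1/2}-y^{-1/2})$ lifted to all translates by $\ZZ\tau+\ZZ$, once one reinstates the $y$-dependence through the full $J(0)$; more precisely, it yields the averaging symbol applied to $(y^{1/2}-y^{-1/2})^{-1}$ that governs the polar part in \eqref{eqn:finite_polar}. The trace over $A(\gt{a})_\tw$ twisted by $\varrho(g)$ gives a product of Jacobi theta functions in the variable $2z$ indexed by the eigenvalues of $\varrho(g)$, and the trace over $\uA(\gt{b})_\tw$ twisted by $\uvarrho(g)$ gives a corresponding product in the denominator indexed by the eigenvalues of $\uvarrho(g)$. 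Expanded in the domain $0<-\Im(z)<\Im(\tau)$, these combine into a ratio of infinite products in $y$ and $q$.

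Next, I would verify that this ratio of infinite products coincides with $\psi^{(\frac{5}{2})}_g(\tau,z)$ in two stages. In stage one, I would check that it defines a (weak) meromorphic Jacobi form of weight $1$ and index $\frac{5}{2}$ with the correct level: invariance under the elliptic action \eqref{elliptic} is immediate from the product shape, and invariance under the relevant modular subgroup follows from the standard modular behavior of free-field characters on canonically twisted modules, carried out case by case for the five conjugacy classes in Table \ref{tab:chars:irr:10+5}. In stage two, I would extract the polar part: since the only poles come from $\uA(\gt{b})_\tw$, they are simple and located at torsion points, and Proposition \ref{prop:Dswt1} applies. A direct residue computation using the product form shows that $D_s$ is a nonzero constant proportional to $\chi^{(10+5)}_g=\chi_4(g)$ times the trace over $\gt{e}$, matching the $-2\chi^{(\ell)}_g\Av_{\frac{M}{4}}((y^{1/2}-y^{-1/2})^{-1})$ term of \eqref{eqn:finite_polar} via Theorem \ref{thm:mjf:polarfinitedec} and Theorem \ref{non-thm}.

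The main obstacle will be identifying the finite parts. After subtracting the polar part, the difference between the two sides is a weak holomorphic Jacobi form of weight $1$ and index $\frac{5}{2}$ with known level; the space of such forms is finite-dimensional, and by Theorem \ref{non-thm} its theta coefficients are honest modular forms, so equality can be established by matching only finitely many Fourier coefficients. Concretely, I would compute the first few coefficients of the trace over $W^{(\frac{5}{2})}_\tw$ for each of the five conjugacy classes, and compare with the tabulated $q$-series for $H^{(10+5)}_g$ in \cite{MUM}. Agreement in low orders, combined with the dimension bound on the ambient space of Jacobi forms and with the matching polar parts, will force the two sides to coincide as meromorphic Jacobi forms, which in the specified domain is the claimed identity.
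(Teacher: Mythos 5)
Your first step --- factorising the trace over $W^{(\frac52)}_\tw=A(\gt{e})_\tw\otimes A(\gt{a})_\tw\otimes\uA(\gt{b})_\tw$ into an infinite product governed by the eigenvalues of $\varrho(g)$ and $\uvarrho(g)$ --- is exactly how the paper's proof begins, arriving at (\ref{eqn:ummod:10+5-psitildeprod}). But from there the two arguments diverge. The paper does not prove modularity of the trace, match polar parts, or invoke any dimension count: it simply takes the closed-form expressions for the five functions $\psi^{(\frac52)}_g$ recorded in \S B.3.11 of \cite{umrec} (explicit quotients of $\eta$, $\theta_1$, $\theta_2$), expands each as an infinite product using (\ref{eqn:ummod:10+5-jactheta}), and checks case by case that the product literally coincides with (\ref{eqn:ummod:10+5-psitildeprod}) for the eigenvalue data of Table \ref{tab:ummod:10+5-evals}. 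Your route re-derives information that is already available in closed form, and in doing so takes on substantially more machinery than is needed.

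Beyond being longer, your identification step has genuine gaps as written. First, Proposition \ref{prop:Dswt1} and Theorem \ref{non-thm} are stated and proved only for forms in $J^\mst_{1,m}$, i.e.\ for $\Gamma=\SL_2(\ZZ)$; the twined functions for $g\neq e$ are Jacobi forms only for proper congruence subgroups (the paper flags this explicitly at the end of \S\ref{sec:mjf:mjf}), so you cannot invoke those results directly --- you would need their analogues at higher level, which are not established in the paper. Second, the concluding step ``agreement in low orders \ldots will force the two sides to coincide'' requires an effective bound (a Sturm-type bound, or an explicit basis for the relevant finite-dimensional space of weight $1$ weak Jacobi forms at the correct level and half-integral index), and weight $1$ is a delicate weight for which such bounds must be justified rather than asserted; without a concrete bound the argument is not complete. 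Finally, a smaller point: you attribute the averaged polar term $-2\chi^{(\ell)}_g\Av_{\frac M4}\bigl((y^{1/2}-y^{-1/2})^{-1}\bigr)$ to the $A(\gt{e})_\tw$ trace, but the $y$-dependence and hence the pole at $y=1$ come from the $\uA(\gt{b})_\tw$ denominator factors (the $A(\gt{e})_\tw$ factor with the $J_\gt{e}(0)$ insertion contributes no $y$-dependence); you correct this implicitly later, but the residue computation should be anchored there from the start.
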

\begin{proof}
For $g\in G^{(10+5)}$ let $\{\lambda_1,\lambda_2\}$ be the eigenvalues for the action of $g$ on $\gt{a}^+$, and let $\{\ulambda_1,\ulambda_2,\ulambda_3\}$ be the eigenvalues for its action on $\gt{b}^+$. Then 
\begin{gather}	
	\label{eqn:ummod:10+5-psitildeprod}
\widetilde\psi^{(\frac52)}_g= 2y^{-\frac12} 
\prod_{n>0} 
	\frac {(1-q^n)^2\prod_{i=1}^2(1-\bar\lambda_i y^{-2}q^{n-1})(1-  \lambda_i y^{2}q^n)} 
	{\prod_{j=1}^3(1-\bar\ulambda_j  y^{-1} q^{n-1})(1- \ulambda_j \!y q^n)},
\end{gather}
where $(1-X)^{-1}$ is to be understood as a shorthand for $\sum_{k\geq 0}X^k$. This series converges in the domain $0<-\Im(z)<\Im(\tau)$ once we substitute $\ex(\tau)$ for $q$ and $\ex(z)$ for  $y$. So it remains to check that the right-hand side of (\ref{eqn:ummod:10+5-psitildeprod}) agrees with the meromorphic Jacobi form $\psi^{(\frac52)}_g$ when regarded as a function of $\tau$ and $z$. According to \S B.3.11 of \cite{umrec}, the $\psi^{(\frac52)}_g$ are given explicitly by
\begin{gather}
\begin{split}\label{eqn:ujf-psi5/2g}	
	\psi^{(\frac52)}_{1A}(\tau,z)&:=2i\eta(\tau)^3{\theta_1(\tau,2z)^2}{\theta_1(\tau,z)^{-3}},\\
	\psi^{(\frac52)}_{2A}(\tau,z)&:=-2i\eta(\tau)^3{\theta_1(\tau,2z)^2}{\theta_1(\tau,z)^{-1}\theta_2(\tau,z)^{-2}},\\
	\psi^{(\frac52)}_{3A}(\tau,z)&:=2i\eta(\tau)^3{\theta_1(3\tau,6z)}{\theta_1(\tau,2z)^{-1}\theta_1(3\tau,3z)^{-1}},\\
	\psi^{(\frac52)}_{2B}(\tau,z)&:=2i\eta(\tau)^3{\theta_1(\tau,2z)\theta_2(\tau,2z)}{\theta_1(\tau,z)^{-2}\theta_2(\tau,z)^{-1}},\\
	\psi^{(\frac52)}_{4A}(\tau,z)&:=-2i\eta(\tau)\eta(2\tau){\theta_1(\tau,2z)\theta_2(\tau,2z)}{\theta_2(2\tau,2z)^{-1}},
\end{split}
\end{gather}
where 
\begin{gather}
\begin{split}\label{eqn:ummod:10+5-jactheta}
	\eta(\tau)&:=q^{\frac1{24}}\prod_{n>0}(1-q^n),\\
	\theta_1(\tau,z)&:=-iq^{\frac18}y^{\frac12}\prod_{n>0}(1-y^{-1}q^{n-1})(1-yq^n)(1-q^n),\\
	\theta_2(\tau,z)&:=q^{\frac18}y^{\frac12}\prod_{n>0}(1+y^{-1}q^{n-1})(1+yq^n)(1-q^n).
\end{split}
\end{gather}
By applying these product formula definitions 
of $\eta$, $\theta_1$ and $\theta_2$ to the formulas in (\ref{eqn:ujf-psi5/2g}) we obtain product formulas for the $\psi^{(\frac52)}_g$. For each $g$ we find agreement with the product formula (\ref{eqn:ummod:10+5-psitildeprod}) for $\widetilde{\psi}^{(\frac52)}_g$ obtained by substituting the corresponding values for $\lambda_i$ and $\ulambda_j$, as are given in Table \ref{tab:ummod:10+5-evals}. This completes the proof.
\end{proof}

\begin{table}[ht]
\begin{center}
\begin{small}
\caption{\small Eigenvalues for $\ell=10+5$}
\begin{tabular}{ c | c  c c} \toprule\label{tab:ummod:10+5-evals}
    $[g]$ & $\{\lambda_i\}$ & $\{\ulambda_j\}$ \\ \midrule
    1A & $\{1,1\}$ & $\{1,1,1\}$\\ 
    2A & $\{1,1\}$ & $\{1,-1,-1\}$\\ 
    3A & $\{\omega,\omega^2\}$ & $\{1,\omega,\omega^2\}$\\ 
    2B & $\{1,-1\}$& $\{1,1,-1\}$\\
    4A & $\{1,-1\}$&$\{-1,\ii,-\ii\}$\\
    \bottomrule   
\end{tabular}
\end{small}
\end{center}
\end{table}

\subsection{Lambency $14+7$}\label{sec:ummod:14+7}

Let $\gt{e}$ and $\gt{a}$ be $2$-dimensional complex vector spaces equipped with non-degenerate symmetric bilinear forms, and let $\gt{b}$ be a $4$-dimensional complex vector space equipped with a non-degenerate anti-symmetric bilinear form. Choose polarisations $\gt{e}=\gt{e}^+\oplus\gt{e}^-$, $\gt{a}=\gt{a}^+\oplus\gt{a}^-$ and $\gt{b}=\gt{b}^+\oplus\gt{b}^-$, and let $\{e^\pm\}$, $\{a^\pm\}$ and $\{b_i^\pm\}$ be bases for $\gt{e}^\pm$, $\gt{a}^\pm$ and $\gt{b}^\pm$, respectively, such that $\lab e^-,e^+\rab=\lab a^-,a^+\rab=1$ and $\llab b_i^-,b_j^+\rab=\delta_{i,j}$.
Define a vertex operator superalgebra and a canonically twisted module for it by setting
\begin{gather}\label{eqn:ummod:7/2-WWtw}
\begin{split}
	W^{(\frac72)}&:=A(\gt{e})\otimes A(\gt{a})\,\otimes\uA(\gt{b}), \\
	W^{(\frac72)}_\tw&:=A(\gt{e})_\tw\otimes A(\gt{a})_\tw\,\otimes \uA(\gt{b})_\tw,
\end{split}
\end{gather}
and by equipping $W^{(\frac72)}$ with the usual tensor product Virasoro element, which we denote $\omega^{(\frac72)}$. 
Set $\jmath_\gt{e}:=\jmath\otimes\vv\otimes\vv$ and
$\jmath^{(\frac72)}:=3\vv\otimes\jmath\otimes\vv+\vv\otimes\vv\otimes \jmath$.
The group $\GL(\gt{e}^+)\otimes \GL(\gt{a}^+)\otimes\GL(\gt{b}^+)$ acts naturally on $W^{(\frac72)}$ and $W^{(\frac72)}_\tw$, respecting the vertex operator superalgebra module structures and preserving the bigradings defined by the Virasoro element $\omega^{(\frac72)}$ and the zero mode of $\jmath^{(\frac72)}$.

\begin{table}[h]
\begin{center}
\begin{small}
\caption{\small Character table of ${G}^{(14+7)}\simeq S_3$}\label{tab:chars:irr:14+7}
\begin{tabular}{c|rrr}\toprule
$[g]$&   1A&   2A&   3A\\
	\midrule
$\chi_1$&  $1$&   $1$&   $1$\\
$\chi_2$&  $1$&   $-1$&  $1$\\
$\chi_3$&  $2$&   $0$&   $-1$\\
\bottomrule
\end{tabular}
\end{small}
\end{center}
\end{table}

The group $G^{(14+7)}$ is isomorphic to $S_3$ according to \S2.4 of \cite{MUM}. Choose homomorphisms $\varrho:G^{(14+7)}\to \GL(\gt{a}^+)$ and $\uvarrho\;\;:G^{(14+7)}\to \GL(\gt{b}^+)$ such that the corresponding characters are the sign character and the unique irreducible character of dimension $2$, respectively (i.e. $\chi_2$ and $\chi_3$ in Table \ref{tab:chars:irr:14+7}).
Then the assignment $g\mapsto I\otimes \varrho(g)\,\otimes \uvarrho(g)$ defines 
actions of $G^{(14+7)}$ on $W^{(\frac72)}$ and $W^{(\frac72)}_\tw$. Set $(-1)^F:=(-I)\otimes(-I)\otimes I$, and let $J_\gt{e}(0)$ denote the coefficient of $z^{-1}$ in $Y_\tw(\jmath_\gt{e},z)$. Let $J(0)$ be the coefficient of $z^{-1}$ in $Y_\tw(\jmath^{(\frac72)},z)$, and let $L(0)$ be the coefficient of $z^{-2}$ in $Y_\tw(\omega^{(\frac72)},z)$. For $g\in G^{(14+7)}$ consider the formal series $\widetilde{\psi}^{(\frac72)}_g\in \CC[y][[y^{-1}]][[q]]$ defined by
\begin{gather}\label{eqn:ummod:7/2-trg}
	\widetilde{\psi}^{(\frac72)}_g:=-2\tr(gJ_\gt{e}(0)(-1)^Fy^{J(0)}q^{L(0)}|W^{(\frac72)}_\tw).
\end{gather}

\begin{thm}\label{thm:ummod:7/2-psig}
For $g\in G^{(14+7)}$ the series $\widetilde{\psi}^{(\frac72)}_g$ is the expansion of $\psi^{(\frac72)}_g$ in the domain $0<-\Im(z)<\Im(\tau)$.
\end{thm}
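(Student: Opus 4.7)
The plan is to imitate the proof of Theorem \ref{thm:ummod:10+5-psig} step by step, adjusting only for the smaller ranks of $\gt{a}$ and $\gt{b}$ here and for the change in the coefficient of $\vv\otimes\jmath\otimes\vv$ in $\jmath^{(\frac{7}{2})}$ from $2$ to $3$. The first step is to factor the trace (\ref{eqn:ummod:7/2-trg}) through the tensor product decomposition (\ref{eqn:ummod:7/2-WWtw}). Since $\omega^{(\frac72)}$ is a sum of three commuting Virasoro elements, each $J$ operator is a sum of operators acting on only one factor, and $(-1)^F$ respects the decomposition, the trace factorises cleanly into three standard boson/fermion traces on the canonically twisted Clifford-type modules $A(\gt{e})_\tw$, $A(\gt{a})_\tw$ and $\uA(\gt{b})_\tw$.

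Carrying this out, with $\lambda$ denoting the single eigenvalue of $\varrho(g)$ on $\gt{a}^+$ and $\{\ulambda_1,\ulambda_2\}$ the eigenvalues of $\uvarrho(g)$ on $\gt{b}^+$, the computation should yield
\begin{align*}
\widetilde{\psi}^{(\frac72)}_g
= 2y^{-\frac12}\prod_{n>0}
\frac{(1-q^n)^2\,(1-\bar\lambda y^{-3}q^{n-1})(1-\lambda y^{3} q^n)}
{\prod_{j=1}^{2}(1-\bar\ulambda_j y^{-1}q^{n-1})(1-\ulambda_j y q^n)},
\end{align*}
where the exponents $\pm 3$ on $y$ on the $\gt{a}$-side and $\pm 1$ on the $\gt{b}$-side reflect the coefficients of $\jmath$ appearing in $\jmath^{(\frac72)}$, and the leading $2y^{-\frac12}$ absorbs the prefactor $-2$ in (\ref{eqn:ummod:7/2-trg}) together with the contribution of $J_\gt{e}(0)(-1)^{F_\gt{e}}$ on $A(\gt{e})_\tw$. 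With $q=\ex(\tau)$ and $y=\ex(z)$ this product converges in the domain $0<-\Im(z)<\Im(\tau)$, and $\widetilde{\psi}^{(\frac72)}_g$ is by definition its expansion as a formal power series in $y^{-1}$ and $q$.

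The second step is to compare this product expansion with $\psi^{(\frac72)}_g$. I would extract from \S B.3 of \cite{umrec} (the analogue of (\ref{eqn:ujf-psi5/2g}) for lambency $14+7$) the explicit expression for each $\psi^{(\frac72)}_g$ as a ratio of $\eta$-products and Jacobi theta functions $\theta_1,\theta_2$, then apply the product formulas (\ref{eqn:ummod:10+5-jactheta}) to rewrite them as infinite products. Using that $\varrho=\chi_2$ and $\uvarrho=\chi_3$, one reads off $(\lambda;\ulambda_1,\ulambda_2)=(1;1,1),\,(-1;1,-1),\,(1;\omega,\omega^{2})$ for $[g]=1A,2A,3A$ respectively from Table \ref{tab:chars:irr:14+7}, and checks agreement term by term for each conjugacy class.

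The main obstacle is bookkeeping rather than conceptual: one must correctly identify the coefficient $3$ as forcing $y^{\pm 3}$ factors (ensuring the polar behaviour at $z=0$ matches a weight~$1$ meromorphic Jacobi form of index $\frac{7}{2}$ after the $y^{-\frac12}$ prefactor), and one must verify the three class-by-class identities between an $\eta\theta$-product and the above infinite product. Modulo this case analysis, all other ingredients---convergence, the tensor product factorisation of the trace, and the compatibility of $\varrho,\uvarrho$ with the $\GL(\gt{a}^+)\times\GL(\gt{b}^+)$-action---are inherited directly from the $\ell=10+5$ argument.
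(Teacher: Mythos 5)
Your proposal follows the paper's proof essentially verbatim: the same product formula $2y^{-\frac12}\prod_{n>0}(1-q^n)^2(1-\bar\lambda y^{-3}q^{n-1})(1-\lambda y^3q^n)\prod_{j=1}^2(1-\bar\ulambda_j y^{-1}q^{n-1})^{-1}(1-\ulambda_j yq^n)^{-1}$ obtained by factoring the trace over the tensor decomposition, the same eigenvalue data $(1;1,1)$, $(-1;1,-1)$, $(1;\omega,\omega^2)$ for $1A$, $2A$, $3A$, and the same case-by-case comparison against the explicit $\eta$--$\theta$ expressions from \S B.3.15 of \cite{umrec}. This is correct and matches the paper's argument.
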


\begin{proof}
Let $g\in G^{(14+7)}$. Then $g$ acts as a multiplication by a scalar, $\lambda$ say, on $\gt{a}^+$, and we may write $\{\ulambda_1,\ulambda_2\}$ for the eigenvalues for its action on $\gt{b}^+$. As in \S\ref{sec:ummod:10+5} we have
\begin{gather}	
	\label{eqn:ummod:7/2-psitildeprod}
\widetilde\psi^{(\frac72)}_g= 2y^{-\frac12} 
\prod_{n>0} 
	\frac {(1-q^n)^2(1-\bar\lambda y^{-3}q^{n-1})(1-  \lambda y^{3}q^n)} 
	{\prod_{j=1}^{2}(1-\bar\ulambda_j  y^{-1} q^{n-1})(1- \ulambda_j \!y q^n)},
\end{gather}
and this series converges in the domain $0<-\Im(z)<\Im(\tau)$ upon substitution of $\ex(\tau)$ for $q$ and $\ex(z)$ for $y$. To check that the right-hand side of (\ref{eqn:ummod:7/2-psitildeprod}) agrees with the meromorphic Jacobi form $\psi^{(\frac72)}_g$ when viewed as a function of $\tau$ and $z$ we again perform a case by case check, using the
explicit expressions
\begin{gather}
\begin{split}\label{eqn:ujf-psi7/2g}		
	\psi^{(\frac72)}_{1A}(\tau,z)&:=2i\eta(\tau)^3{\theta_1(\tau,3z)}{\theta_1(\tau,z)^{-2}},\\
	\psi^{(\frac72)}_{2A}(\tau,z)&:=2i\eta(\tau)^3{\theta_2(\tau,3z)}{\theta_1(\tau,z)^{-1}\theta_2(\tau,z)^{-1}},\\
	\psi^{(\frac72)}_{3A}(\tau,z)&:=-2i\eta(3\tau)\th_1(\tau,z)\th_1(\tau,3z)\th_1(3\tau,3z)^{-1},
\end{split}
\end{gather}
reproduced here from \S B.3.15 of \cite{umrec}, and the values of $\lambda$ and $\ulambda_j$ in Table \ref{tab:ummod:14+7-evals}.
\end{proof}

\begin{table}[ht]
\begin{small}
\begin{center}
\caption{\small Eigenvalues for $\ell=14+7$}
\begin{tabular}{ c | c  c c} \toprule\label{tab:ummod:14+7-evals}
    $[g]$ & $\lambda$ & $\{\ulambda_j\}$ \\ \midrule
    1A & $1$ & $\{1,1\}$\\ 
    2A & $-1$ & $\{1,-1\}$\\ 
    3A & $1$ & $\{\omega,\omega^2\}$\\ 
    \bottomrule   
\end{tabular}
\end{center}
\end{small}
\end{table}

\subsection{Lambency $22+11$}\label{sec:ummod:22+11}

Let $\gt{e}$ and $\gt{a}$ be $2$-dimensional complex vector spaces equipped with non-degenerate symmetric bilinear forms, and let 
$\gt{b}$ and $\gt{b}'$ be $2$-dimensional complex vector spaces equipped with non-degenerate anti-symmetric bilinear forms. 
Fix polarisations $\gt{e}=\gt{e}^+\oplus\gt{e}^-$, $\gt{a}=\gt{a}^+\oplus\gt{a}^-$, $\gt{b}=\gt{b}^+\oplus\gt{b}^-$ and $\gt{b}'={\gt{b}'}^{+}\oplus {\gt{b}'}^-$, and let $\{e^\pm\}$, $\{a^\pm\}$, $\{b^\pm\}$ and $\{{b'}^{\pm}\}$ be bases for $\gt{e}^\pm$, $\gt{a}^\pm$, $\gt{b}^\pm$ and ${\gt{b}'}^\pm$, respectively, such that $\lab e^-,e^+\rab=\lab a^-,a^+\rab=\llab b^-,b^+\rab=\llab {b'}^-,{b'}^+\rab=1$.

Define a super vertex operator algebra $W^{(\frac{11}2)}$, and a canonically twisted $W^{(\frac{11}2)}$-module $W^{(\frac{11}2)}_\tw$ by setting
\begin{gather}\label{eqn:ummod:11/2-WWtw}
\begin{split}
	W^{(\frac{11}2)}&:=A(\gt{e})\otimes A(\gt{a})\,\otimes\uA(\gt{b})\otimes \uA(\gt{b}'), \\
	W^{(\frac{11}2)}_\tw&:=A(\gt{e})_\tw\otimes A(\gt{a})_\tw\,\otimes \uA(\gt{b})_\tw\otimes \uA(\gt{b}')_\tw.
\end{split}
\end{gather}
Equip $W^{(\frac{11}2)}$ with the usual tensor product Virasoro element, $\omega^{(\frac{11}2)}$, 
set $\jmath_\gt{e}:=\jmath\otimes\vv\otimes\vv\otimes\vv$, and set 
\begin{gather}
\jmath^{(\frac{11}2)}:=4\vv\otimes\jmath\otimes\vv\otimes\vv+\vv\otimes\vv\otimes \jmath\otimes\vv+2\vv\otimes\vv\otimes\vv\otimes\jmath.
\end{gather}
Then $\GL(\gt{e}^+)\otimes \GL(\gt{a}^+)\otimes\GL(\gt{b}^+)\otimes \GL({\gt{b}'}^+)$ acts naturally on $W^{(\frac{11}2)}$ and $W^{(\frac{11}2)}_\tw$, respecting the super vertex operator algebra module structures and preserving the bigradings. 

The 
umbral group $G^{(22+11)}$ is cyclic of order $2$ according to \S2.4 of \cite{MUM}. Define an action of $G^{(22+11)}$ on $W^{(\frac{11}2)}$ and $W^{(\frac{11}2)}_\tw$ by mapping the non-trivial element to $I\otimes I\otimes (-I)\otimes (-I)$. 
Similar to \S\S\ref{sec:ummod:10+5},\ref{sec:ummod:14+7} we set 
$(-1)^F:=(-I)\otimes(-I)\otimes I\otimes I$, 
let $J_\gt{e}(0)$ denote the coefficient of $z^{-1}$ in $Y_\tw(\jmath_\gt{e},z)$, let $J(0)$ be the coefficient of $z^{-1}$ in $Y_\tw(\jmath^{(\frac{11}2)},z)$, and let $L(0)$ be the coefficient of $z^{-2}$ in $Y_\tw(\omega^{(\frac{11}2)},z)$. Then to $g\in G^{(\frac{11}2)}$ we assign the formal series 
\begin{gather}\label{eqn:ummod:11/2-trg}
	\widetilde{\psi}^{(\frac{11}2)}_g:=-2\tr(gJ_\gt{e}(0)(-1)^Fy^{J(0)}q^{L(0)}|W^{(\frac{11}2)}_\tw).
\end{gather}

\begin{thm}\label{thm:ummod:11/2-psig}
For $g\in G^{(22+11)}$ the series $\widetilde{\psi}^{(\frac{11}2)}_g$ is the expansion of $\psi^{(\frac{11}2)}_g$ in the domain $0<-\Im(z)<\Im(\tau)$.
\end{thm}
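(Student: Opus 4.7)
My plan is to follow the strategy used for Theorems~\ref{thm:ummod:10+5-psig} and~\ref{thm:ummod:7/2-psig}: compute the graded trace (\ref{eqn:ummod:11/2-trg}) as an infinite product using the standard character formulas for Clifford module constructions, check that this product converges in the specified domain, and then match it term-by-term against the explicit expressions for $\psi^{(\frac{11}{2})}_g$ given in \cite{umrec}.

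More concretely, since $G^{(22+11)}$ is cyclic of order $2$ and $\gt{a}^+$, $\gt{b}^+$ and ${\gt{b}'}^+$ are all one-dimensional, any $g\in G^{(22+11)}$ acts as scalars $\lambda$, $\ulambda$ and $\ulambda'$ on $\gt{a}^+$, $\gt{b}^+$ and ${\gt{b}'}^+$ respectively, with $(\lambda,\ulambda,\ulambda')=(1,1,1)$ for $g=1A$ and $(1,-1,-1)$ for $g=2A$ (recall that the non-trivial element is $I\otimes I\otimes(-I)\otimes(-I)$). Using the fact that the $J$-charges assigned by $\jmath^{(\frac{11}2)}$ to $\gt{a}^+$, $\gt{b}^+$ and ${\gt{b}'}^+$ are $4$, $1$ and $2$ respectively, together with the usual expression for the graded trace of $gJ_\gt{e}(0)(-1)^F y^{J(0)}q^{L(0)}$ on a Clifford module tensor product, I obtain
\begin{gather}\label{eqn:ummod:11/2-psitildeprod}
\widetilde\psi^{(\frac{11}2)}_g = 2 y^{-\frac12}\prod_{n>0}\frac{(1-q^n)^2(1-\bar\lambda y^{-4}q^{n-1})(1-\lambda y^{4}q^n)}{(1-\bar\ulambda y^{-1}q^{n-1})(1-\ulambda y q^n)(1-\bar\ulambda' y^{-2}q^{n-1})(1-\ulambda' y^{2}q^n)},
\end{gather}
with $(1-X)^{-1}$ interpreted as $\sum_{k\geq 0}X^k$ in the numerator factors. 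As in the previous cases, substituting $q=\ex(\tau)$ and $y=\ex(z)$ yields convergence in the domain $0<-\Im(z)<\Im(\tau)$, because the problematic factors of the form $(1-\ulambda y q^n)$ and $(1-\ulambda' y^2 q^n)$ have $|yq^n|<1$ there, while the expanded factors $(1-\bar\lambda y^{-4}q^{n-1})^{-1}$ have $|y^{-4}q^{n-1}|<1$ once $0<-\Im(z)<\Im(\tau)$.

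To finish, I would compare (\ref{eqn:ummod:11/2-psitildeprod}) against the explicit formulas for $\psi^{(\frac{11}2)}_{1A}$ and $\psi^{(\frac{11}2)}_{2A}$ recorded in \cite{umrec} (the $\ell=22+11$ entries of the pure D-type tables), expanding those formulas into infinite products via the standard product formulas (\ref{eqn:ummod:10+5-jactheta}) for $\eta$, $\theta_1$ and $\theta_2$. Case by case the resulting products should match (\ref{eqn:ummod:11/2-psitildeprod}) under the eigenvalue assignments above. Since both sides are meromorphic Jacobi forms on the same domain with matching Fourier--Jacobi expansions, this establishes the equality of $\widetilde\psi^{(\frac{11}2)}_g$ and $\psi^{(\frac{11}2)}_g$ as required.

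The main obstacle is really just bookkeeping: tracking the shift $y^{-\frac12}$, the signs coming from $(-1)^F$ and the $J$-charge $4$ of $\gt{a}^+$ (which contributes the exponent $y^{\pm 4}$ rather than $y^{\pm 2}$ or $y^{\pm 3}$ as in the smaller lambencies), and verifying that the two cases $g=1A$ and $g=2A$ indeed reproduce the quoted closed-form expressions for $\psi^{(\frac{11}2)}_g$. Once the eigenvalue table analogous to Tables~\ref{tab:ummod:10+5-evals} and~\ref{tab:ummod:14+7-evals} is in hand, the verification reduces to elementary manipulation of infinite products.
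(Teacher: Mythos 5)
Your proposal is correct and follows essentially the same route as the paper: the paper records the same infinite product, with your eigenvalues $\ulambda=\ulambda'=\pm 1$ appearing as $\mp$ signs in the denominator, and then matches it against the closed forms $2i\eta(\tau)^3\theta_1(\tau,4z)\theta_1(\tau,z)^{-1}\theta_1(\tau,2z)^{-1}$ and $-2i\eta(\tau)^3\theta_1(\tau,4z)\theta_2(\tau,z)^{-1}\theta_2(\tau,2z)^{-1}$ from \S B.3.19 of \cite{umrec}, exactly as you outline. The only blemishes are cosmetic: the geometric-series convention applies to the denominator factors (not the numerator), and for the charge-two $bc$ pair the relevant quantity to control is $|y^{2}q^{n}|$ rather than $|yq^{n}|$ --- neither affects the argument.
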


\begin{proof}
We have
\begin{gather}	
	\label{eqn:ummod:11/2-psitildeprod}
\widetilde\psi^{(\frac{11}2)}_g= 2y^{-\frac12} 
\prod_{n>0} 
	\frac {(1-q^n)^2(1- y^{-4}q^{n-1})(1-  y^{4}q^n)} 
	{(1\mp y^{-1} q^{n-1})(1\mp y q^n)(1\mp y^{-2} q^{n-1})(1\mp y^2 q^n)},
\end{gather}
where the signs in the denominator are minus for $g=e$, and plus for $g$ the non-trivial element of $G^{(22+11)}$. In the former case the right hand side of (\ref{eqn:ummod:11/2-psitildeprod}) is the expansion of
\begin{gather}\label{ummod:11/2-psi1A}
2i\eta(\tau)^3\theta_1(\tau,4z)\theta_1(\tau,z)^{-1}\theta_1(\tau,2z)^{-1}
\end{gather}
in the domain $0<-\Im(z)<\Im(\tau)$, and (\ref{ummod:11/2-psi1A}) is exactly the expression for $\psi^{(\frac{11}2)}_{1A}$ that appears in \S B.3.19 of \cite{umrec}.
In the latter case (\ref{eqn:ummod:11/2-psitildeprod}) gives the expansion of
\begin{gather}\label{ummod:11/2-psi2A}
-2i\eta(\tau)^3\theta_1(\tau,4z)\theta_2(\tau,z)^{-1}\theta_2(\tau,2z)^{-1},
\end{gather}
which is the expression for $\psi^{(\frac{11}2)}_{2A}$ that appears in \S B.3.19 of \cite{umrec}. The proof is complete.
\end{proof}

\subsection{Lambency $46+23$}\label{sec:ummod:46+23}

The vertex operator superalgebra we use to realise the $\psi^{(\frac{23}2)}_g$ for $\ell=46+23$ is exactly the same as for $\ell=22+11$, but has a different bigrading. That is, we set $W^{(\frac{23}2)}:=W^{(\frac{11}2)}$ and $W^{(\frac{23}2)}_\tw:=W^{(\frac{11}2)}_\tw$, take $\jmath_\gt{e}$ as in \S\ref{sec:ummod:22+11}, and also define $\omega^{(\frac{23}2)}:=\omega^{(\frac{11}2)}$, but set
\begin{gather}
\jmath^{(\frac{23}2)}:=6\vv\otimes\jmath\otimes\vv\otimes\vv+2\vv\otimes\vv\otimes \jmath\otimes\vv+3\vv\otimes\vv\otimes\vv\otimes\jmath.
\end{gather}
The group $G^{(46+23)}$ is in fact trivial according to \S2.4 of \cite{MUM}, so we only aim to realise a single function $\psi^{(\frac{23}2)}_{1A}$. 
We define
$(-1)^F$, $J_\gt{e}(0)$ and $L(0)$ exactly as in \S\ref{sec:ummod:22+11}, but let $J(0)$ be the coefficient of $z^{-1}$ in $Y_\tw(\jmath^{(\frac{23}2)},z)$. We consider the formal series 
\begin{gather}\label{eqn:ummod:23/2-trg}
	\widetilde{\psi}^{(\frac{23}2)}_e:=-2\tr(J_\gt{e}(0)(-1)^Fy^{J(0)}q^{L(0)}|W^{(\frac{23}2)}_\tw).
\end{gather}

\begin{thm}\label{thm:ummod:23/2-psig}
The series $\widetilde{\psi}^{(\frac{23}2)}_e$ is the expansion of $\psi^{(\frac{23}2)}_{1A}$ in the domain $0<-\Im(z)<\Im(\tau)$.
\end{thm}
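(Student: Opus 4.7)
The plan is to proceed in direct analogy with the proof of Theorem \ref{thm:ummod:11/2-psig}, exploiting the fact that the underlying vertex operator superalgebra $W^{(\frac{23}2)}$ and its canonically twisted module $W^{(\frac{23}2)}_\tw$ coincide with those of \S\ref{sec:ummod:22+11}. The only change is the replacement of $\jmath^{(\frac{11}2)}$ by $\jmath^{(\frac{23}2)}$, which rescales the $J(0)$-weights on the tensor factors $A(\gt{a})_\tw$, $\uA(\gt{b})_\tw$, $\uA(\gt{b}')_\tw$ from $(4,1,2)$ to $(6,2,3)$ respectively. Since $G^{(46+23)}$ is trivial, no conjugacy-class bookkeeping is needed.

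First I would compute $\widetilde\psi^{(\frac{23}2)}_e$ using the multiplicativity of the trace over the tensor decomposition of $W^{(\frac{23}2)}_\tw$. The $A(\gt{e})_\tw$ factor with the $J_\gt{e}(0)(-1)^F$ insertion produces the prefactor $2y^{-\frac12}$ together with a factor of $\prod_{n>0}(1-q^n)^2$, exactly as in \S\ref{sec:ummod:22+11} (the $\gt{e}$ and $\gt{a}$ parts of $(-1)^F$ contribute the requisite signs to turn $(1+\cdot)$ into $(1-\cdot)$). The remaining three factors contribute the standard fermionic products, now with rescaled exponents, yielding
\begin{gather*}
\widetilde\psi^{(\frac{23}2)}_e
= 2y^{-\frac12}\prod_{n>0}\frac{(1-q^n)^2(1-y^{-6}q^{n-1})(1-y^6q^n)}{(1-y^{-2}q^{n-1})(1-y^2q^n)(1-y^{-3}q^{n-1})(1-y^3q^n)}.
\end{gather*}
With the convention $(1-X)^{-1}=\sum_{k\ge 0}X^k$, this series converges upon substitution of $\ex(\tau)$ for $q$ and $\ex(z)$ for $y$ precisely in the domain $0<-\Im(z)<\Im(\tau)$, exactly as in \S\S\ref{sec:ummod:10+5}--\ref{sec:ummod:22+11}.

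Next I would identify this infinite product with the explicit formula for $\psi^{(\frac{23}2)}_{1A}$ given in \S B.3 of \cite{umrec}. Applying the product expressions (\ref{eqn:ummod:10+5-jactheta}) for $\eta$ and $\theta_1$ to the expected form $2i\eta(\tau)^3\theta_1(\tau,6z)\theta_1(\tau,2z)^{-1}\theta_1(\tau,3z)^{-1}$ (which matches the $\ell=22+11$ pattern under the substitution $(4,1,2)\mapsto(6,2,3)$) produces an infinite product that agrees termwise with the displayed expression above, completing the identification.

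The main obstacle, such as it is, amounts to careful bookkeeping: verifying that the shift from $\jmath^{(\frac{11}2)}$ to $\jmath^{(\frac{23}2)}$ produces exactly the expected rescaling of the theta function arguments, and confirming the precise expression recorded in \S B.3 of \cite{umrec} for $\psi^{(\frac{23}2)}_{1A}$. No new analytic input is needed beyond what was used in \S\ref{sec:ummod:22+11}, and no new convergence argument is required since the product structure is identical.
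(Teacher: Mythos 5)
Your proposal is correct and follows essentially the same route as the paper: the trace over the tensor factors yields exactly the product $2y^{-\frac12}\prod_{n>0}(1-q^n)^2(1-y^{-6}q^{n-1})(1-y^6q^n)\,(1-y^{-2}q^{n-1})^{-1}(1-y^2q^n)^{-1}(1-y^{-3}q^{n-1})^{-1}(1-y^3q^n)^{-1}$, and the closed form you predict, $2i\eta(\tau)^3\theta_1(\tau,6z)\theta_1(\tau,2z)^{-1}\theta_1(\tau,3z)^{-1}$, is precisely the expression recorded in \S B.3.23 of the cited reference. The paper simply runs the identification in the opposite direction (closed form to product) and notes the convergence of the geometric-series expansion in the stated domain.
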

\begin{proof}
From \S B.3.23 of \cite{umrec} we have $\psi^{(\frac{23}2)}_{1A}=2i\eta(\tau)^3\theta_1(\tau,6z)\theta_1(\tau,2z)^{-1}\theta_1(\tau,3z)^{-1}$, which can be written as an infinite product,
\begin{gather}	
	\label{eqn:ummod:23/2-psitildeprod}
\psi^{(\frac{23}2)}_{1A}= 2y^{-\frac12} 
\prod_{n>0} 
	\frac {(1-q^n)^2(1- y^{-6}q^{n-1})(1-  y^{6}q^n)} 
	{(1- y^{-2} q^{n-1})(1- y^2 q^n)(1- y^{-3} q^{n-1})(1- y^3 q^n)}.
\end{gather}
On the other hand $\widetilde\psi^{(\frac{23}2)}_e$ is the series we obtain by replacing $(1-X)^{-1}$ with $\sum_{k\geq 0}X^k$ in (\ref{eqn:ummod:23/2-psitildeprod}), and converges in the given domain. This proves the claim. 
\end{proof}

\section{Discussion}\label{sec:disc}
 
In this section we discuss a few features and a possible interpretation 
of the pure D-type umbral moonshine module constructions presented in \S\ref{sec:ummod}.

\vspace{15pt}
\noindent
{\em Relation to ${\cal N}=2$ Superconformal Algebra}
\vspace{5pt}

Note that all the meromorphic Jacobi forms
\(\psi^{({M\over 4})}_g\) discussed in \S\ref{sec:ummod} have the property that $\psi^{({M\over 4})}_g( \Psi_{1,-\frac{1}{2}})^{-1}$ is a weight 0 weak (holomorphic) Jacobi form for $\SL_2(\ZZ)$ of index ${M\over 4}+{1\over 2}$, where
\be
\Psi_{1,-{1\over2}}(\t,z) := i {\eta^3(\tau)\over \theta_1(\tau,z) }
\ee
(cf. (\ref{eqn:ummod:10+5-jactheta})). Then the splitting of meromorphic Jacobi forms into polar and finite parts as described by Theorem \ref{thm:mjf:polarfinitedec}
corresponds precisely to the decomposition of the weight 0 weak Jacobi form
$\psi^{({M\over 4})}_g( \Psi_{1,-\frac{1}{2}})^{-1}$
into characters of the ${\cal N}=2$ superconformal algebra. In particular, the universal factor $( \Psi_{1,-\frac{1}{2}})^{-1}$ multiplying the polar and finite parts give the contributions from the massless and massive representations (respectively) of the ${\cal N}=2$ superconformal algebra of central charge $c=6({M\over 4}+{1\over 2})$.
  We refer to \S7 of \cite{2014arXiv1406.5502C} for details. 

This is reminiscent of the situation in the case $X=A_1^{\oplus 24}$, $\ell=2$ of umbral moonshine (cf. \cite{MUM}), namely Mathieu moonshine for $M_{24}$. There, analogous to the other 22 cases of umbral moonshine, we can regard the weight $\frac12$ mock modular form $H^{(2)}_g$ for all $g\in M_{24}$ as arising from splitting a certain weight 1 index 2 meromorphic Jacobi form $\psi^{(2)}_g$ into its polar and finite parts (see \S3.4 of \cite{MUM}). Alternatively, we can regard $H^{(2)}_g$ as arising from decomposing a weight 0 index 1 weak Jacobi form $\psi^{(2)}_g ( \Psi_{1,1})^{-1}$  into characters of the ${\cal N}=4$ superconformal algebra at central charge $6$ (see \S4.3 of \cite{MUM}), where
\be
\Psi_{1,1}(\t,z) := i {\eta^3(\tau) \theta_1(\tau,2z)\over \theta_1(\tau,z)^2 }. 
\ee
Famously, for the identity class $g=e$, the weak Jacobi form $\psi^{(2)}_g ( \Psi_{1,1})^{-1}$ equals the elliptic genus of K3 surfaces, and its ${\cal N}=4$ decomposition is precisely the context in which this $M_{24}$ moonshine was initially discovered \cite{Eguchi2010}.

\vspace{15pt}
\noindent
{\em Relation to Pure A-Type  Umbral Moonshine Modules}
\vspace{5pt}

For the other cases of pure A-type, corresponding to the Niemeier root systems $A_2^{\oplus 12}$, $A_3^{\oplus 8}$, $A_4^{\oplus 6}$, $A_6^{\oplus 4}$, $A_8^{\oplus 3}$, $A_{12}^{\oplus 2}$ and $A_{24}$, there is for general group elements $g$ no weight 0 weak Jacobi form related to $\psi^{(\ell)}_g$ via a multiplication of $(\Psi_{1,1})^{-1}$ due to the presence of poles not just at lattice points $z\in \ZZ\tau+\ZZ$ but also at $2$-torsion points $z\in \ZZ\tau+\ZZ+{1\over 2}$. (Note the contrast with the $\psi^{(\frac M4)}_g$ in \S\ref{sec:ummod}, whose poles are restricted to lattice points for all $g$.)   However, 
for all the pure A-type cases  $A_{\ell-1}^{\oplus d}$ where $d={24\over \ell-1}$, 
the function $\psi^{(\ell)}_e ( \Psi_{1,1})^{-1}$ is a 
weight 0 weak Jacobi form  for $\SL_2(\ZZ)$ 
corresponding to the identity class. 
 
As a result, we see that the graded dimension of the pure A-type and pure D-type umbral moonshine modules, captured by weight 1 meromorphic Jacobi forms at integral and half-integral indices respectively, are both related to weak Jacobi forms of weight 0. 
In fact, the pairs of pure D-type and pure A-type cases of umbral moonshine listed in 
Table \ref{tab:ADmap}  give rise to the same weak Jacobi forms. Precisely, if $M$ is as in Table \ref{tab:ADmap} and $M':=\frac12M+1$ then $\psi^{(\frac{M}4)}_e$ is the weight 1 index $\frac M4$ meromorphic Jacobi form that represents the $D_{M'}^{d'}$ case of umbral moonshine (cf. \S\ref{sec:mjf:umbral}), where $M'd'=24$, and the corresponding pure A-type case of umbral moonshine is $A_{M''-1}^{d''}$, where $M''=(M')'=\frac14 M+\frac32$ and $(M''-1)d''=24$. The corresponding meromorphic Jacobi forms are related by
 \be
 \psi^{({M\over 4})}_e = {\theta_1(\tau,z)\over\theta_1(\tau,2z)} \psi^{(M'')}_e.
 \ee
In other words, in these four cases (and also for $M=6$, which is not considered in this work) the weight 0 weak Jacobi forms $\psi^{({M\over 4})}_e( \Psi_{1,-\frac{1}{2}})^{-1} =  \psi^{(M'')}_e ( \Psi_{1,1})^{-1}$  can be interpreted as encoding graded dimensions for either a pure D-type or a pure A-type moonshine module, the former constructed in the present paper and the latter constructed (in whole for $M\in \{22,26\}$ but in part for $M\in\{10,14\}$) in \cite{umvan4}.

 \begin{table}[h]
 \begin{center}
 \begin{small}
 \caption{\small The A--D Correspondence}\label{tab:ADmap}
\begin{tabular}{ccc}\toprule
$M$ & D-type & A-type \\\midrule
$10$&$D_6^{\oplus 4}$  & $A_3^{\oplus 8}$\\\midrule 
$14$&$D_8^{\oplus 3}$  & $A_4^{\oplus 6}$\\\midrule
$22$&$D_{12}^{\oplus 2}$  & $A_6^{\oplus 4}$\\\midrule
$46$&$D_{24}$  & $A_{12}^{\oplus 2}$\\\bottomrule
\end{tabular}
\end{small} 
\end{center}
\end{table}

\vspace{15pt}
\noindent
{\em Possible Interpretation of the Module}
\vspace{5pt}

Note that in each of the four pure D-type cases of  umbral moonshine discussed in this paper, the corresponding module construction in \S\ref{sec:ummod} can be interpreted as given by $d$ pairs of $bc$-$\beta\gamma$ systems, where $d=3$ for $\ell=10+5$ (cf. \S\ref{sec:ummod:10+5}), and $d=2$ in the remaining cases (cf. \S\S\ref{sec:ummod:14+7}-\ref{sec:ummod:46+23}). 
This is reminiscent of the construction of the chiral de Rham complex \cite{MR1704283}, 
which defines a sheaf of superconformal vertex operator superalgebras over a Calabi--Yau manifold of complex dimension $d$. In that construction, each local section of the chiral de Rham complex is given by  $d$ pairs of $bc$-$\beta\gamma$ systems, and 
has the structure of a so-called rank $d$ topological ${\cal N}=2$ superconformal algebra. In particular the
OPE of the stress-energy tensor with itself gives a vanishing central charge.
Note however that our choices of $U(1)$ charges for the different $bc$ and $\beta\gamma$ systems do not preserve this topological ${\cal N}=2$ structure. Nonetheless it would be interesting to explore whether this close connection to the chiral de Rham complex, present also in \cite{umvan4}, indeed reflects a hitherto unnoticed physical aspect to umbral moonshine. 

\section*{Acknowledgements}

We thank Andrew O'Desky for discussions on closely related topics. 
The work of M.C. was supported by ERC starting grant H2020 ERC StG \#640159. 
J.D. acknowledges support from the Simons Foundation (\#316779), and the U.S. National Science Foundation (DMS 1203162, DMS 1601306).

\addcontentsline{toc}{section}{References}

\providecommand{\href}[2]{#2}\begingroup\raggedright\endgroup

\end{document}